\newcommand{\ind}{\mathbbm{1}}
\newcommand{\p}[1]{\left(#1\right)}
\newcommand{\abs}[1]{\left\vert#1\right\vert}
\def\Z{\mathbb{Z}}
\def\R{\mathbb{R}}
\def\sign{\textup{sign}}
\def\eps{\varepsilon}
\newtheorem{thrm}{Theorem}
\newtheorem{lemma}[thrm]{Lemma}
\newtheorem{claim}{Claim}
\newtheorem{prop}[thrm]{Proposition}
\newtheorem{conj}{Conjecture}
\newtheorem{problem}[conj]{Problem}
\begin{document}

\title{Multi-colour competition with reinforcement}
\date{May 31, 2022}
\author{Daniel Ahlberg and Carolina Fransson}
\maketitle

\begin{abstract}
We study a system of interacting urns where balls of different colour/type compete for their survival, and annihilate upon contact. For competition between two types, the underlying graph (finite and connected), determining the interaction between the urns, is known to be irrelevant for the possibility of coexistence, whereas for $K\ge3$ types the structure of the graph does affect the possibility of coexistence. We show that when the underlying graph is a cycle, competition between $K\ge3$ types almost surely has a single survivor, thus establishing a conjecture of Griffiths, Janson, Morris and the first author. Along the way, we give a detailed description of an auto-annihilative process on the cycle, which can be perceived as an expression of the geometry of a M\"obius strip in a discrete setting.
\end{abstract}

\section{Introduction}

Probabilistic problems phrased in terms of balls drawn from urns date back to Jacob Bernoulli's \emph{Ars Conjectandi} in 1713.
The diverse assortment of disciplines, in which central phenomena can be understood through an urn problem, have contributed to their continued importance.
P\'olya's urn model~\cite{eggpol23,polya30}, introduced around a century ago, is an archetype of a random process with reinforcement. Random processes with reinforcement are examples of processes where the entire trajectory of the process contributes to its eventual fate.
In P\'olya's model, a finite number of red and blue balls are initially placed in an urn. In each step of the process, a ball is drawn uniformly at random from the urn, and returned along with an identical copy of itself. The drawn colour is thus reinforced, in that balls of the same colour are more likely to be drawn in future steps. The effect of the reinforcement declines over time, resulting in the early steps of the process having a lasting effect over its continued evolution, and the limiting proportion of red balls in P\'olya's urn being random.

Random processes with reinforcement arise in a varied range of contexts, including economics, statistics and evolutionary biology; see~\cite{pemantle07} for a comprehensive survey. 
The long-term coordination observed in reinforcement processes have in different disciplines been featured under different names. In economics, such behaviour has been described as a \emph{lock-in} behaviour~\cite{arthur88}, whereas in physical contexts, the process is said to exhibit \emph{self-organisation}~\cite{baktanwie87,marval98}.
Systems of interacting urns form an interesting class of reinforcement processes, and have been used to model the formation of social networks~\cite{skypem00}, competition between business brands~\cite{benbenchelim15}, neuronal processing in the brain~\cite{hofholkuzrus16} and synchronisation in complex physical networks~\cite{alecrighi17}. 
The interaction between urns in the system can be determined by an underlaying discrete structure, which is thought to describe relations between agents, or to add a spatial dimension to the process. A compelling problem is to understand under what circumstances the spatial component will manifest in the lock-in or self-organisation of the process.

We shall in this paper be concerned with a model of interacting urns where balls of different type annihilate upon contact. Previous work~\cite{ahlgrijanmor19} have revealed that the spatial component, in this setting, has an effect on the eventual fate of the process, as we describe further below. The annihilative feature of the process has in the physics literature been suggested as a model for the inert chemical reaction $A+B\to\emptyset$; see~\cite{ovczel78,touwil83} and studied further in~\cite{braleb91a,braleb91b}. However, let us mention that our motivation originates from~\cite{ahlgrijanmor19}, where the annihilative feature was found to capture competition between different types in a planar growth model with reinforcement.

Consider the graph $C_N$ consisting of $N$ vertices and $N$ edges positioned in a cyclic fashion. Imagine an urn positioned at each vertex of the cycle. Place balls of $K\ge2$ different types into the urns so that no urn contains balls of more than one type. The balls in the system (either initially present or added later) are equipped with independent Poisson clocks. At the ring of a clock, the corresponding ball sends a copy of itself to each neighbour of the graph. Balls of different type are not allowed in the same urn simultaneously, and when a ball is added to an urn which contains balls of a different type, then the new ball annihilates with one of the already existing balls. We shall refer to this model as the \emph{$K$-type competing urn scheme} on the cycle.

The above competing urn scheme was introduced in~\cite{ahlgrijanmor19}, where the authors showed that for $K=2$, irregardless of the initial configuration of balls, the system of urns will eventually almost surely consist of balls of a single type. In fact, the same conclusion was shown to holds for the corresponding urn scheme on \emph{any} finite connected graph. The authors conjectured that also for $K\ge3$, in the case of the cycle, coexistence of $K$ types has probability zero. The authors further illustrated, by means of an example, that the case $K\ge3$ is strikingly different in that there are finite connected graphs for which mutual coexistence of three or more types \emph{does} occur with positive probability.

In this paper we settle the aforementioned conjecture from~\cite{ahlgrijanmor19}.

\begin{thrm}\label{thm:urn}
For every $N\ge K\ge 2$, and any nonzero initial configuration, the $K$-type competing urn scheme on the cycle of length $N$ has almost surely a single surviving type.
\end{thrm}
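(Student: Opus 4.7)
My plan is to argue by induction on the number of types $K$, taking as base case the 2-type result of~\cite{ahlgrijanmor19} which holds on any finite connected graph. The inductive step reduces to showing that when $K\ge 3$ distinct types are present on $C_N$, at least one type almost surely goes extinct in finite time; iterating the argument eventually leaves a 2-type configuration to which the base case applies.

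The natural state descriptor is the \emph{arc structure}: the partition of the cycle into maximal monochromatic runs. A crucial first observation is that an interior urn of an arc only ever receives balls of its own colour, so its mass is non-decreasing and it never changes type. Consequently the number of arcs is a non-increasing, integer-valued process, and the dynamics are entirely driven by what happens at the boundaries between adjacent arcs. A boundary is frozen as long as both adjacent urns carry positive mass; it moves only when a boundary urn is emptied by an annihilation, at which moment a race between the Poisson clocks of the two neighbouring urns decides whether the vacated site is recoloured from the left or from the right, and hence whether two arcs of the same colour meet and merge.

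To settle the question one must understand the joint motion of the boundaries. My plan is to package this motion as an auxiliary \emph{auto-annihilative process}, in which each boundary becomes a particle that moves on the cycle and, upon meeting another particle, annihilates in pair. For $K\ge 3$ the arithmetic of boundary parity on $C_N$ is obstructed, which is precisely where the discrete M\"obius strip enters: lifting the process to an orientation-reversing double cover resolves the parity constraint and turns the analysis into a coalescence problem for an annihilating particle system on a cycle of length $2N$ with a twist. The desired conclusion — that at least one type goes extinct — corresponds to eventual annihilation of at least one particle pair in this dual dynamics.

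The step I expect to be the main obstacle is the analysis of this auto-annihilative process. The reinforcement mechanism induces nontrivial correlations between the Poisson clocks driving different boundary motions, so one cannot appeal directly to the classical coalescence results for independent annihilating random walks on a cycle. The hard part will be extracting enough effective independence from the urn dynamics — presumably through a comparison with, or coupling to, a simpler coalescing system on the M\"obius cover — to carry the annihilation argument through. The orientation reversal intrinsic to the M\"obius geometry is then what converts what would otherwise be a stuck configuration on $C_N$ into a freely evolving, and eventually coalescing, particle system.
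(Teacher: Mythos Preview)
Your high-level picture is sound: the number of arcs (what the paper calls \emph{tribes}) is non-increasing, and the task is to show it keeps decreasing until only one remains. But from that point on your proposal diverges from the paper and, more importantly, does not contain a mechanism for the hard step.

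First, a structural point. The paper does not induct on the number of types~$K$, and it does not try to track the boundary positions as an annihilating particle system. It works directly with the number of tribes, distinguishing the cases where this number is even or odd. When it is even, one can alternately assign signs $\pm1$ to the tribes and obtain a genuine two-type encoding on~$C_N$; the known two-type result (Theorem~\ref{thm:AGJM}) then forces a tribe to die. When it is odd, the same $\pm1$ encoding leaves the first and last tribes with the same sign; this is repaired by declaring the edge between them sign-reversing, producing the auto-annihilative process $(Z(t))_{t\ge0}$ with reinforcement matrix~$A^*$ of~\eqref{eq:A}. The M\"obius geometry you intuit is exactly this sign-reversing edge; but it is applied to the \emph{configuration of ball counts}, not to the boundary positions.

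Second, the substance of the argument. The auto-annihilative process is not analysed via any coupling to a coalescing or annihilating random-walk system, and I do not see how such a coupling could be made to work: the boundary motions are governed by urn masses that grow exponentially and are heavily correlated, and in the odd case the limiting configuration has exactly \emph{one} front surviving, not zero, so the picture of pair-annihilation is misleading. What the paper actually does is linear algebra plus martingales. The reinforcement matrix~$A^*$ has leading eigenvalue $\lambda^*=2\cos(\pi/N)$ with a two-dimensional eigenspace spanned by the harmonic vectors~\eqref{eq:first_vectors}. For each eigenvector~$v$ one checks that $e^{-\lambda t}\,v\cdot Z(t)$ is a martingale, bounds its quadratic variation, and proves $L^2$ convergence for the leading eigenspace and vanishing for the others. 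A coupling argument (comparing the process with and without its first nucleation) then shows the leading-eigenvalue limit is almost surely nonzero. The upshot is Theorem~\ref{thm:signed}: $e^{-\lambda^* t}Z(t)$ converges to a random harmonic profile $(R\cos((j-1)\pi/N+S))_j$, which has a single sign change on~$[0,\pi)$, hence a single front. Coupled back to the $K$-type process, this forces a tribe to die in the odd case as well.

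So the gap in your proposal is precisely the part you flagged as the obstacle: you do not have a plan for the auto-annihilative analysis, and the direction you suggest (coalescence, effective independence) is not the one that works. The working route is spectral: identify the eigensystem of~$A^*$, build the associated martingales, and show the leading ones have nonzero limits.
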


Models used to describe competing technologies have in the economics literature come to incorporate reinforcement mechanisms~\cite{arthur88}. In more physically motivated models for competing growth, reinforcement is often absent. This includes multi-type versions of the models of Eden~\cite{eden61} and Richardson~\cite{richardson73}. These models exhibit spatial coexistence among multiple competitors of equal strength~\cite{deihag08,hagpem98}.
Motivation for the competing urns scheme introduced in~\cite{ahlgrijanmor19} came form competition in a version of the Eden model on $\Z^2$ with a bootstrapping effect similar to that of bootstrap percolation and related automata~\cite{morris17}. The bootstrapping mechanism can be thought of as a spatial analogue of a reinforcement effect. In fact, it turns out that the question of coexistence in this competing growth model on $\Z^2$ can be rephrased in terms of coexistence in the competing urn scheme on a cycle.
By ruling out coexistence among two types on cycles of any length, the authors of~\cite{ahlgrijanmor19} were able to rule out coexistence among two types for the growth model on $\Z^2$. As a corollary of Theorem~\ref{thm:urn} we extend their result for the growth model on $\Z^2$ to any number $K\ge2$ types, hence proving another conjecture from~\cite{ahlgrijanmor19}; see Section~\ref{sec:lattice} below.
These results show how the presence of reinforcement may break the ties that arise in models for competing growth on $\Z^2$.

The study of (generalised) P\'olya urns via embedding in multi-type branching processes was pioneered in~\cite{athkar68}. Leaning on martingale analysis and Perron-Frobenius theory, they could determine the long-term behaviour of certain urn processes.
Also the two-type competing urn scheme in~\cite{ahlgrijanmor19} was analysed via the theory for multi-type branching processes. 
The methods required for its multi-type analogue are different in that the competing urn scheme, for $K\ge3$, will reduce to a process which, in general, is neither a branching process, nor amenable to Perron-Frobenius theory. Nevertheless, we shall see that it is sufficiently similar to a branching process in order for martingale methods to apply, which will reveal a more complex behaviour of the processes. We shall next, in Section~\ref{sec:AGJM}, outline the analysis of the two-type process, before we proceed to describe a process with an auto-adverse behaviour, resulting from the analysis of the multi-type process, in Section~\ref{sec:signed}.

\subsection{The two-type competing urn scheme}\label{sec:AGJM}

Let $G$ be a finite connected graph and $A$ its adjacency matrix. The two-type competing urn scheme on $G$ can be described as follows:
Encode type 1 balls as $1$s and type 2 balls as $-1$s. Let $Y(t)=(Y_k(t))_{k=1}^N$ be the vector that encodes the configuration of balls at time $t\ge0$. Then, if a nucleation occurs at time $t$ at position $k$, the change equals
\begin{equation}\label{eq:two-type}
Y(t)-Y(t-)=A_k\cdot\sign(Y_k(t-)),
\end{equation}
where $A_k$ is the $k$th column of the matrix $A$ and $\sign(\,\cdot\,)$ the sign function.

For initial configurations consisting of only one type, no annihilations will ever occur. In this case~\eqref{eq:two-type} describes a multi-type continuous-time Markov branching process with mean matrix given by $A$, and after proper rescaling the process converges almost surely to a random multiple of the Perron-Frobenius vector~\cite{athreya68}. By considering a conservative version of the two-type competing urn scheme, in which the interaction between balls of the different types result in a third type instead of annihilation, it was in~\cite{ahlgrijanmor19} possible to express the evolution of the competing urn scheme as the difference between two (dependent) multi-type branching processes. This was a key step in order to rule out coexistence in the case that $K=2$, resulting in the following theorem.

\begin{thrm}\label{thm:AGJM}
Let $G$ be a finite connected graph and let $\lambda$ denote the largest positive eigenvalue of the adjacency matrix of $G$ and $v$ the corresponding eigenvector. Then, for any nonzero initial configuration, there exists an almost surely nonzero random variable $W$ such that almost surely
$$
\lim_{t\to\infty}e^{-\lambda t}Y(t)=Wv.
$$
\end{thrm}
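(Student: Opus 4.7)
\emph{Plan.} My approach is to use the strategy hinted at in the paragraph preceding the statement: express $Y(t)$ as a signed difference $Y(t)=\Zred(t)-\Zblue(t)$ of two coupled multi-type continuous-time Markov branching processes. These arise from a conservative version of the scheme in which red--blue collisions produce a neutral type that is counted by both $\Zred$ and $\Zblue$. Examined on its own, each of $\Zred$ and $\Zblue$ is then a continuous-time multi-type Markov branching process on the vertex set of $G$ whose offspring mean matrix is the adjacency matrix $A$; they are coupled only through the clocks attached to neutral balls.

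Because $G$ is finite and connected, $A$ is symmetric, nonnegative and irreducible, so Perron--Frobenius theory provides a simple positive eigenvalue $\lambda$ with strictly positive eigenvector $v$. Applying the classical theorem of Athreya and Karlin~\cite{athkar68}, refined in~\cite{athreya68}, separately to $\Zred$ and $\Zblue$ produces nonnegative random variables $W^{(r)}$ and $W^{(b)}$ with
\[
e^{-\lambda t}\Zred(t)\xrightarrow{\text{a.s.}} W^{(r)} v,\qquad e^{-\lambda t}\Zblue(t)\xrightarrow{\text{a.s.}} W^{(b)} v,
\]
so that $e^{-\lambda t}Y(t)\to Wv$ almost surely with $W:=W^{(r)}-W^{(b)}$. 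To pin down the mean of $W$ I would verify directly from the generator of the two-type urn scheme that $M(t):=e^{-\lambda t}\langle v,Y(t)\rangle$ is a martingale: a ring at vertex $k$ produces a jump of size $\sign(Y_k(t-))\langle v,A_k\rangle=\lambda\sign(Y_k(t-))v_k$ at rate $|Y_k(t-)|$, and summing these contributions gives drift $\lambda\langle v,Y(t-)\rangle$, exactly cancelling the exponential prefactor. Hence $\mathbb E[W]=\langle v,Y(0)\rangle/\langle v,v\rangle\ne 0$ for any nonzero initial configuration.

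The main obstacle is upgrading $\mathbb E[W]\ne 0$ to $W\ne 0$ almost surely. My plan is to estimate the predictable quadratic variation of $M(t)$ using the branching-process coupling and the known growth of $\Zred+\Zblue$, obtaining $\sup_t \mathbb E[M(t)^2]<\infty$; then $M(t)$ converges in $L^2$ to $W\langle v,v\rangle$, so in particular $W$ has positive variance and nonzero mean. To rule out an atom of $W$ at $0$ I would appeal to a Kesten--Stigum-type absolute continuity property for the Perron martingale limit of supercritical multi-type Markov branching processes with finite-variance offspring distribution, applied to the processes obtained by separately tracking the descendants of each of the initial red and blue balls. Making such an argument rigorous in the coupled setting, where the neutral balls prevent $\Zred$ and $\Zblue$ from being independent and perturb the marginal branching structure, is where I expect the real work to lie.
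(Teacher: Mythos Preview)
The paper does not actually prove Theorem~\ref{thm:AGJM}; it is quoted from~\cite{ahlgrijanmor19}, with only the one-paragraph sketch preceding the statement. Your conservative-coupling framework matches that sketch exactly, and obtaining the almost sure convergence $e^{-\lambda t}Y(t)\to Wv$ as a difference of two multi-type branching processes is the intended route.

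Your plan for showing $W\neq 0$ almost surely, however, has a genuine gap. First, the assertion $\mathbb E[W]=\langle v,Y(0)\rangle/\langle v,v\rangle\neq 0$ ``for any nonzero initial configuration'' is simply false: take $G=C_N$, so that $v$ is the all-ones vector, and any initial configuration with equal numbers of $+1$'s and $-1$'s. So the martingale mean gives no information here. Second, even if you establish absolute continuity of $W^{(r)}$ and $W^{(b)}$ individually via Kesten--Stigum, this says nothing about an atom at zero for $W^{(r)}-W^{(b)}$ when the two variables are dependent (as they are, through the neutral balls); there is no abstract obstruction to $P(W^{(r)}=W^{(b)})>0$.

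The argument actually used---and carried out in full detail in this paper for the analogous auto-annihilative limit, in Lemmas~\ref{lemma:limprob} and~\ref{lemma:Wnot0wp1}---is different and does not pass through $\mathbb E[W]$. One first proves a \emph{uniform} bound $P(W(x)=0)\le 1-c$ over all nonzero initial configurations $x$, by coupling the process to a copy of itself in which the first nucleation has been suppressed: the difference of the two copies is itself a version of the process started from one of finitely many bounded configurations, and on the event that the limit of this difference is nonzero the two copies cannot both have vanishing $W$. The Markov property then yields $P(W=0\mid\mathcal F_t)\le 1-c$ for all $t$, and L\'evy's 0--1 law forces $P(W=0)=0$. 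This mechanism is insensitive to whether $\langle v,Y(0)\rangle$ vanishes, which is exactly what is needed.
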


From Perron-Frobenius theory we know that the largest eigenvector has multiplicity one and that the corresponding eigenvector has strictly positive entries. In the case of the cycle $C_N$ we have $\lambda=2$ and the corresponding eigenvector $v$ is the all ones vector $(1,1,\ldots,1)$. By Theorem~\ref{thm:urn} it follows that all coordinates of $Y(t)$ will be either positive or negative for large values of $t$, almost surely, from which the case of $K=2$ of Theorem~\ref{thm:AGJM} follows immediately.

The main reason that a similar approach fails for $K\ge3$ is that it is not obvious how to encode the competition between three or more types in an arithmetic system. Indeed, such a system would have to have a non-Abelean feature in that the order in which balls are placed into the urns matters. That this feature has importance was illustrated in~\cite{ahlgrijanmor19} by giving examples of graphs for which coexistence among $K\ge3$ types occurs with positive probability.

\begin{figure}[h!]
	\centering
	\begin{tikzpicture}[scale=1.0]
	\draw (0:2) arc (0:360:20mm);
	\foreach \phi in {1,...,6}{
		\node[state,fill=white] (\phi) at (120-360/6 * \phi:2cm) {};
		\node[] (\phi) at (120-360/6 * \phi:2.8cm) {$v_\phi$};	
	}
	\node [] at (120-360/6 * 1:2.1cm)[circle, fill=gray,draw=black,inner sep=1.5pt] {};
	\node [] at (180-360/6 * 1:2.15cm)[circle,fill=gray,draw=black,inner sep=1.5pt] {};
	\node [] at (180-360/6 * 1:1.85cm)[circle,fill=gray,draw=black,inner sep=1.5pt] {};
	\node [] at (120-4-360/6 * 2:2.1cm)[circle,draw=black, inner sep=1.5pt] {};
	\node [] at (120+4-360/6 * 2:2.1cm)[circle,draw=black, inner sep=1.5pt] {};
\node [] at (120-360/6 * 2:1.9cm)[circle,draw=black, inner sep=1.5pt] {};
	\node [] at (120-4-360/6 * 3:2.1cm)[circle,draw=black, inner sep=1.5pt] {};
\node [] at (120+4-360/6 * 3:2.1cm)[circle,draw=black, inner sep=1.5pt] {};
\node [] at (120-360/6 * 3:1.9cm)[circle,draw=black, inner sep=1.5pt] {};
	\node [] at (180-360/6 * 5:2.15cm)[circle,draw=black,inner sep=1.5pt] {};
\node [] at (180-360/6 * 5:1.85cm)[circle,draw=black,inner sep=1.5pt] {};
\node [] at (180-360/6 * 6:1.9cm)[circle,draw=black,inner sep=1.5pt] {};
	\end{tikzpicture}
	\caption{Competition between two types on a cycle of length six. In the figure there are two tribes, consisting of the vertices $\{v_1,v_6\}$ and $\{v_2,v_3,v_4,v_5\}$ respectively, and two fronts between vertices $v_1$ and $v_2$, and $v_5$ and $v_6$.}\label{fig:mg_illu1}
\end{figure}
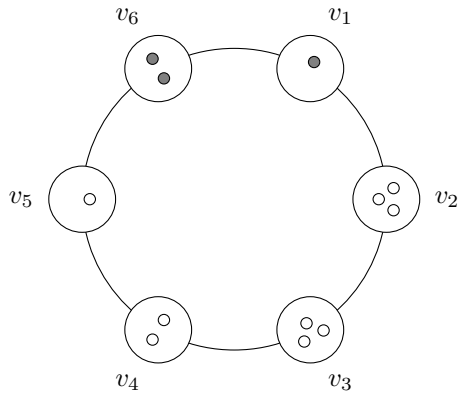

As observed in~\cite{ahlgrijanmor19}, this issue can in part be circumvented for competition on $C_N$, due to the structure of the graph. Given a configuration of $K\ge2$ types on $C_N$ we shall by the word \emph{tribe} refer to a maximal connected subgraph of $C_N$ for which each vertex is either vacant or occupied by balls of the same type.\footnote{Note that when there are no vacant vertices the collection of tribes forms a partition of the vertices of $C_N$, but that vacant vertices may belong to more than one tribe.} The separation between two tribes will be referred to as a \emph{front}; see Figure~\ref{fig:mg_illu1}.  The number of fronts is then equal to the number of tribes (as long as there are at least two tribes, and zero otherwise) and can be described as the number of occupied vertices of the cycle such that the closest occupied vertex in the clockwise direction is occupied by a different type. Note that balls on $C_N$ are ``aware'' that balls of a tribe with which they share a front are of a different type than their own, but ``oblivious'' to the same for balls in tribes that are separated by at least two fronts. Consequently, in the case that the initial configuration consists of an even number of tribes, it is possible to encode the competition between the $K$ types by alternatingly giving tribes positive or negative numbers. This encoding will no longer be an equivalent formulation of the process, as the encoding, in general, will no longer describe the competing urn process once a tribe is eliminated, and the balls clockwise and counter-clockwise of the eliminated tribe become ``aware'' of each other.

In the case that the number of tribes is odd, encoding the competition process in the above manner will result in the first and last tribes being encoded by positive numbers. Competition according to~\eqref{eq:two-type} will then not result in annihilation between balls in the first and final tribes, as it should. This can be adjusted for by declaring the edge (or one of the edges if several) connecting the first and final tribes as a ``sign-reversing'' edge, with the property that every ball that is sent along this edge has its sign reversed. This process will be properly introduced below, and studied to certain lengths.

\subsection{Auto-annihilative growth on a circular graph}\label{sec:signed}

Large parts of this paper will consist in analysing a growth process on the cycle with an auto-adverse behaviour. We shall next describe this process, the results of our analysis thereof, as well as its connection to the $K$-type competing urn scheme.

Consider the following process on the cycle of length $N$. At time zero, distribute a finite number of signed particles to the vertices of the cycle in a way such that no particles of opposing signs are at the same position. We may encode the initial configuration of particles by a vector in $\Z^N$. When time starts, all particles (initially present or born later) reproduce according to independent Poisson clocks. At the ring of a clock, the particle associated with the clock sends an independent copy of itself to each neighbour, except for particles sent across the edge connecting vertices $1$ and $N$, in which case the sent particle has its sign changed. Particles of the same sign do not interact, but particles of opposing signs that appear at the same vertex annihilate immediately on a one-by-one basis; see Figure~\ref{fig:mg_illu}.

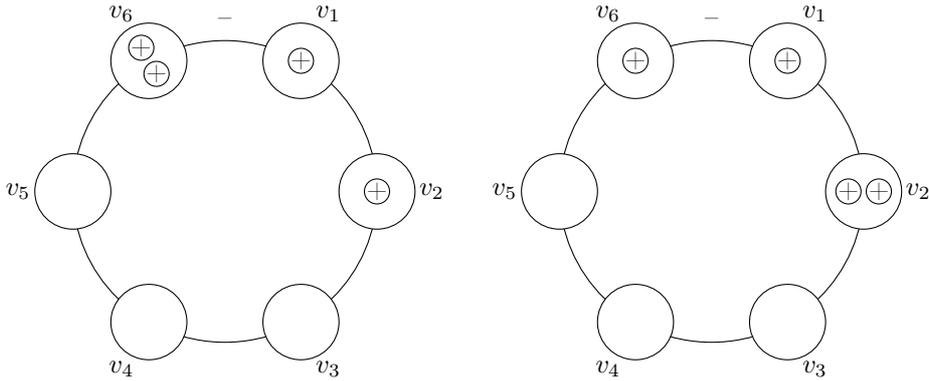
\begin{figure}[h!]
\centering
\begin{tikzpicture}[scale=.8]
\draw (0:2.5) arc (0:360:25mm);
\foreach \phi in {1,...,6}{
	\node[state,fill=white,  minimum size=1cm] (\phi) at (120-360/6 * \phi:2.5cm) {};
	\node[] (\phi) at (120-360/6 * \phi:3.4cm) {$v_\phi$};	
}

\node[] at (90:2.85cm) {--};
\node [draw, circle] at (120-360/6 * 1:2.5cm) {};
\node [] at (120-360/6 * 1:2.5cm) {+};

\node [] at (180-360/6 * 1:2.75cm) {+};
\node [draw, circle] at (180-360/6 * 1:2.75cm) {};

\node [] at (180-360/6 * 1:2.25cm) {+};
\node [draw, circle] at (180-360/6 * 1:2.25cm) {};

\node [] at (120-360/6 * 2:2.5cm) {+};
\node [draw, circle] at (120-360/6 * 2:2.5cm) {};

\begin{scope}[shift={(8,0)}]
\draw (0:2.5) arc (0:360:25mm);
\foreach \phi in {1,...,6}{
	\node[state,fill=white,  minimum size=1cm] (\phi) at (120-360/6 * \phi:2.5cm) {};
	\node[] (\phi) at (120-360/6 * \phi:3.4cm) {$v_\phi$};	
}

\node[] at (90:2.85cm) {--};
\node [draw, circle] at (120-360/6 * 1:2.5cm) {};
\node [] at (120-360/6 * 1:2.5cm) {+};

\node [] at (180-360/6 * 1:2.5cm) {+};
\node [draw, circle] at (180-360/6 * 1:2.5cm) {};

\node [] at (120-360/6 * 2:2.25cm) {+};
\node [draw, circle] at (120-360/6 * 2:2.25cm) {};

\node [] at (120-360/6 * 2:2.75cm) {+};
\node [draw, circle] at (120-360/6 * 2:2.75cm) {};
\end{scope}
\end{tikzpicture}
\caption{Illustration of the graph $C_6$ with the sign-reversing edge marked by a negation sign. If the clock of the particle at node $v_1$ rings (left), then a particle is added to each of the neighbouring urns at $v_2$ and $v_6$ (right). Since the particle sent across the sign-reversing edge is negated, the nucleation results in one annihilation.}\label{fig:mg_illu}
\end{figure}

Let $Z(t)=(Z_k(t))_{k=1}^N$ be the vector that encodes the configuration of particles at time $t$. We shall refer to $(Z(t))_{t\ge0}$ as the \emph{signed competition} or \emph{auto-annihilative growth process} on the cycle of length $N$. Let $A^*$ denote the matrix
\begin{gather}\label{eq:A}
A^*:=\begin{pmatrix}
0 & 1& 0&\cdots & 0 & -1 \\
1 & 0 & 1&\cdots & 0 & 0 \\
0& 1 & 0 &\cdots & 0 & 0 \\
\vdots& \vdots & \vdots & \ddots& \vdots&\vdots \\
0&  0 &0&\cdots& 0 & 1 \\
-1&  0 & 0&\cdots& 1& 0  \\
\end{pmatrix}.
\end{gather}
That is, $A^*$ is obtained from the adjacency matrix $A$ of $C_N$ by negating the entries in the SW and NE corners. Note that if at time $t$ a clock rings at position $k$, then the change caused by the ring can be expressed as
\begin{equation}\label{eq:multi-type}
Z(t)-Z(t-)=A^*_k\cdot\sign(Z_k(t-)),
\end{equation}
where $A^*_k$ denotes the $k$-th column of the matrix $A^*$. We shall henceforth refer to $A^*$ as the \emph{reinforcement matrix} of the auto-annihilative growth process.

The introduction of a sign-reversing edge has interesting effects on the geometry of the process. Imagine, in the auto-annihilative process, descendants of a particle spreading around the cycle. Once descendants of that particle has completed their first lap clockwise, their sign have been negated. Hence, even when started with a single particle, both positive and negative particles will (or at least may) eventually become present. Completing yet another lap clockwise, the descendants of the original particle return with their original sign, much like an ant on a M\"obius strip. This suggests that the geometry of the auto-annihilative process is encoded in the cycle of twice the length.

We emphasise the fact that~\eqref{eq:multi-type} does not describe a multi-type branching process (at least not of the standard form) as the mean/reinforcement matrix $A^*$ has negative entries. Moreover, Perron-Frobenius theory does not apply, and the largest eigenvalue of $A^*$ will no longer (necessarily) be unique.
Inspired by the discussion in the previous paragraph we determine, in Section~\ref{sec:eigenstructure}, the eigensystem of the reinforcement matrix $A^*$, and describe its relation to the adjacency matrix of a cycle of double the length. In particular, we show that the largest positive eigenvalue of the reinforcement matrix equals $2\cos(\pi/N)$ and has multiplicity two. The eigenspace associated to the largest eigenvalue is spanned by the two (orthogonal) vectors
\begin{equation}\label{eq:first_vectors}
v^{(1)}:=\big(\cos \p{\pi(j-1)/N}\big)_{j=1}^N\quad\text{and}\quad v^{(2)}:=\big(\sin\p{\pi(j-1)/N}\big)_{j=1}^N.
\end{equation}
These vectors are described by harmonic functions. Since a linear combination of harmonic functions (with the same period) is again an harmonic function, one may expect that $Z(t)$, for large $t$, should again be described by an harmonic curve. This is indeed the case, and is one of the main results of this paper.

\begin{thrm}\label{thm:signed}
Consider the auto-annihilative growth process on $C_N$ for $N\ge4$, and set $\lambda^*:=2\cos(\pi/N)$. Then, for any nonzero initial configuration, there exist continuous random variables $R$ and $S$, whose joint distribution is fully supported on $\R\times[-\pi/2,\pi/2)$, such that almost surely
$$
\lim_{t\to\infty}e^{-\lambda^* t}Z(t)=\big(R\cos\big(\pi(j-1)/N+S\big)\big)_{j=1}^N.
$$
\end{thrm}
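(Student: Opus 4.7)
The plan is to exploit the fact that $A^{*}$ is a real symmetric matrix, so that every eigenvector produces a real-valued martingale. If $A^{*}u=\mu u$, then $u^{T}Z$ has compensator $\mu u^{T}Z\,dt$, and hence $e^{-\mu t}u^{T}Z(t)$ is a local martingale. Applying this to the top eigenvectors $v^{(1)},v^{(2)}$, I would set
$$
M_{i}(t):=\frac{e^{-\lambda^{*}t}}{\|v^{(i)}\|^{2}}\big\langle v^{(i)},Z(t)\big\rangle,\qquad i=1,2,
$$
aim for almost-sure limits $W_{1},W_{2}$, and then recover the theorem from the polar rewriting $W_{1}\cos(\pi(j-1)/N)+W_{2}\sin(\pi(j-1)/N)=R\cos(\pi(j-1)/N+S)$ with $R\in\mathbb{R}$ and $S\in[-\pi/2,\pi/2)$.

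To prove convergence of $M_{i}$, I would bound its quadratic variation: a clock ring at site $k$ changes $u^{T}Z$ by $\pm\mu u_{k}$ at rate $|Z_{k}(t-)|$, so $\langle M_{i}\rangle$ has density $e^{-2\lambda^{*}t}(\lambda^{*})^{2}\sum_{k}|Z_{k}(t)|(v^{(i)}_{k})^{2}$. Dominating $\|Z(t)\|_{1}$ by the total count of the branching process obtained by switching off annihilations --- which grows at most like $\|Z(0)\|_{1}e^{2t}$ in expectation --- and using that $\lambda^{*}=2\cos(\pi/N)>1$ for $N\ge 4$, one obtains $E\langle M_{i}\rangle_{\infty}<\infty$, yielding both $L^{2}$ and almost-sure convergence of $M_{i}$. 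For each other eigenvector $u$ of $A^{*}$ with eigenvalue $\mu$ of modulus strictly below $\lambda^{*}$, the same second-moment bookkeeping gives $E[(u^{T}Z(t))^{2}]=O(e^{\max(2,2\mu)t})=o(e^{2\lambda^{*}t})$; Doob's inequality along a geometric time grid combined with a Borel--Cantelli argument then upgrades this to $e^{-\lambda^{*}t}u^{T}Z(t)\to 0$ almost surely. Expanding $Z(t)$ in an orthonormal eigenbasis of $A^{*}$ and summing, one concludes
$$
e^{-\lambda^{*}t}Z(t)\;\longrightarrow\;W_{1}v^{(1)}+W_{2}v^{(2)}\qquad\text{a.s.}
$$

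The main obstacle is then to verify the claimed properties of $(R,S)$: that $(W_{1},W_{2})\ne(0,0)$ almost surely, that the joint law has no atoms, and that its support is the whole of $\mathbb{R}^{2}\setminus\{0\}$. For non-degeneracy I would lift $Z$ to the double cover $C_{2N}$ of the cycle, where the sign-reversing edge unfolds into an ordinary edge and the signed process can be realised as the difference of two coupled multi-type branching processes in the conservative form introduced in Section~\ref{sec:AGJM}; a Kesten--Stigum-type analysis on these branching processes should rule out the degenerate event $(W_{1},W_{2})=(0,0)$. For the absence of atoms and for full support I envisage a two-step argument: first a $0$--$1$ law, based on the Markov property at time $t$ together with the fact that the direction of the limit from $Z(t)$ coincides almost surely with that of $(W_{1},W_{2})$, establishing that the support of the angular part $W_{2}/W_{1}$ is a closed, dynamically invariant subset of the circle; then a controllability argument showing that from any intermediate configuration there is a finite sequence of clock rings of positive probability that rotates the projection of $Z$ onto the top eigenspace arbitrarily close to any prescribed direction. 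Together these force the support to be the full circle and preclude atoms.
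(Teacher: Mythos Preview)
Your martingale analysis is essentially the paper's: the same quadratic-variation bound via domination by a Yule process, the same $L^{2}$ convergence for the leading eigenvectors, and the same Doob--Borel--Cantelli argument for the subleading ones. The eigenbasis expansion yielding $e^{-\lambda^{*}t}Z(t)\to W_{1}v^{(1)}+W_{2}v^{(2)}$ is exactly Proposition~\ref{prop:limit}.

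The genuine gap is in your treatment of non-degeneracy. The double-cover heuristic is algebraically correct for the \emph{matrix} $A^{*}$ --- its eigenvectors are precisely the antisymmetric eigenvectors of the adjacency matrix of $C_{2N}$ --- but it does not lift the \emph{dynamics} to a standard two-type competition or branching process on $C_{2N}$: annihilation in the signed process corresponds to interaction between sites $k$ and $k+N$, which is non-local on the double cover, and any coupling that forces antisymmetry destroys the independence of clocks. More importantly, even granting a lift, $\lambda^{*}=2\cos(\pi/N)$ is the \emph{second} eigenvalue of $C_{2N}$, not the Perron root $2$; Kesten--Stigum-type statements give non-degeneracy only for the principal martingale, and secondary-eigenvalue martingales can vanish with positive probability in general. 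So this route does not establish $(W_{1},W_{2})\neq(0,0)$ a.s.

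The paper replaces this with a coupling that compares $(Z(t))$ to the same process with its first nucleation suppressed: one builds three coupled signed processes $Z'$, $Z''$, $D$ (red/blue/purple particles) satisfying $W''=e^{-\lambda^{*}T}(W'+W^{D})$, where $T$ is the independent exponential time of the first ring. Since $W^{D}\neq 0$ with probability bounded below uniformly in the starting state, one gets $P(W(v,x)=0)\le 1-c$ uniformly, and then L\'evy's $0$--$1$ law forces $W(v,x)\neq 0$ a.s.\ for every nonzero $v$ in the top eigenspace. This immediately gives continuity of $S$ (the event $\{S=\theta\}$ is $\{W(\tan\theta\,v^{(1)}+v^{(2)},x)=0\}$), and the same coupling, via the continuous independent factor $e^{-\lambda^{*}T}$, gives continuity of $R$ --- a point your $0$--$1$/controllability sketch does not address. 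Your controllability idea for full support is in the right spirit and close to the paper's Claim~\ref{claim:support}, but the paper pairs it with a Chebyshev concentration bound (Lemma~\ref{lemma:concentration}) to pin down both coordinates simultaneously.
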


The limiting behaviour of Theorem~\ref{thm:signed} is illustrated in Figure~\ref{fig:M1}.

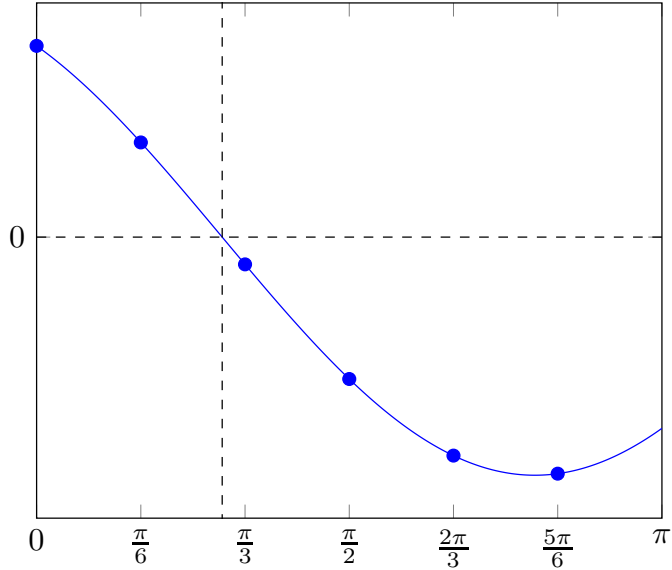
\begin{figure}
	\centering
\tikzmath{\S = 0.2032*pi; \lista={-pi*3/6, -pi*2/6,...,pi*2/6};}
 \begin{tikzpicture}[scale=1.2]
\begin{axis}[
xmin=0,xmax=pi,
ytick={0},
yticklabels={$0$},
xtick={ 0,  pi/6, 2*pi/6, pi/2, 2*pi/3,5*pi/6, pi},
xticklabels={ $0$, $\frac{\pi}{6}$, $\frac{ \pi}{3}$,$\frac{\pi}{2}$, $\frac{2 \pi}{3}$,$\frac{5 \pi}{6}$, $\pi$}
]
\addplot[domain=0:pi,samples=200,blue]{cos(deg(x+\S))};
\addplot[domain=0:pi,draw=none, samples at={0, pi/6,...,pi*5/6}, blue,mark=*]{cos(deg(x+\S))};
\draw[dashed](axis cs:-0,0) -- (axis cs:pi,0);
\draw[dashed](axis cs:pi/2-\S,1) -- (axis cs:pi/2 -\S,-1.5);
\end{axis}
\end{tikzpicture}
\caption{An illustration of the asymptotic composition of the auto-annihilative growth process on $C_6$, described by the curve $x\mapsto r\cos(x+s)$. In the limit the process consists of one tribe and one front, which here is located between $v_2$ and $v_3$. } \label{fig:M1}
\end{figure}

It is natural to talk about tribes and fronts also for the cycle with a sign-reversing edge. In this context, we define a \emph{tribe} to be a maximal connected subgraph of the cycle which either does not contain the sign-reversing edge and for which all vertices are either vacant or occupied by particles of the same sign, or which does contain the sign-reversing edge and where vertices clockwise and counterclockwise of that edge (which are not vacant) are occupied by particles of opposing sign. Again, we refer to the separation between two tribes as a \emph{front}. Note that in the auto-annihilative process the number of fronts coincides with the number of tribes, and that as long as the initial configuration is nonzero, there will always be at least one front present.

From Theorem~\ref{thm:signed} we deduce that, regardless of the number of tribes initially present, the system will eventually consist of a single tribe and a single front, where the process competes with itself. We note, moreover, that the location of the front corresponds to the location where the function $x\mapsto R\cos(x+S)$ has its unique zero on the interval $[0,\pi)$; the front is located between vertices $j$ and $j+1$ (vertex $N+1$ is the same as vertex $1$) if the zero occurs in the interval $\big((j-1)\pi/N,j\pi/N\big)$; see Figure~\ref{fig:M1}.
We emphasise the fact that the random variables $R$ and $S$ are continuous. While this will have marginal importance for the deduction of Theorem~\ref{thm:urn}, it is of interest as it shows that there is no specific amplitude nor shift to which the evolution of the process locks into.

Apart from being a key step toward a proof of Theorem~\ref{thm:urn}, the auto-annihilative process and Theorem~\ref{thm:signed} can be thought of as an attempt to understand the behaviour of competing branching random walks on the integers, evolving in ``equilibrium'', as we elaborate upon at the end of this paper.


\subsection{Competing growth on $\Z^2$}\label{sec:lattice}

The competing urn scheme, introduced in~\cite{ahlgrijanmor19}, arose from a model for competing growth on the $\Z^2$ nearest neighbour lattice. Lattice models for spatial growth were first studied by Eden~\cite{eden61}. In his model, a finite number of sites of $\Z^2$ are initially occupied, and all other sites are vacant. As time evolves, vacant sites with at least one occupied neighbour become occupied at rate 1. In~\cite{ahlgrijanmor19} the authors were concerned with a variant of the Eden model, in which vacant sites with one occupied neighbour become occupied at rate 1, whereas vacant sites with at least two occupied neighbours become occupied immediately. The Eden model and the variant of the Eden model from~\cite{ahlgrijanmor19} constitute two extreme point of a larger family of nucleation and growth processes introduced in~\cite{kessch95}, and further studied in~\cite{bolgrimorrolsmi20,cerman13a,dehsch97a}. The immediate occupation of sites with at least two occupied neighbours results in a bootstraping effect, similar to that of bootstrap percolation and related automata. A multi-type version of the corresponding model is defined analogously: Sites are initially either vacant or occupied by one of $K$ different types. Vacant sites with one occupied neighbour of type $j$ become occupied by type $j$ at rate 1, and vacant sites with at least two neighbours occupied by type $j$ become occupied by type $j$ immediately.

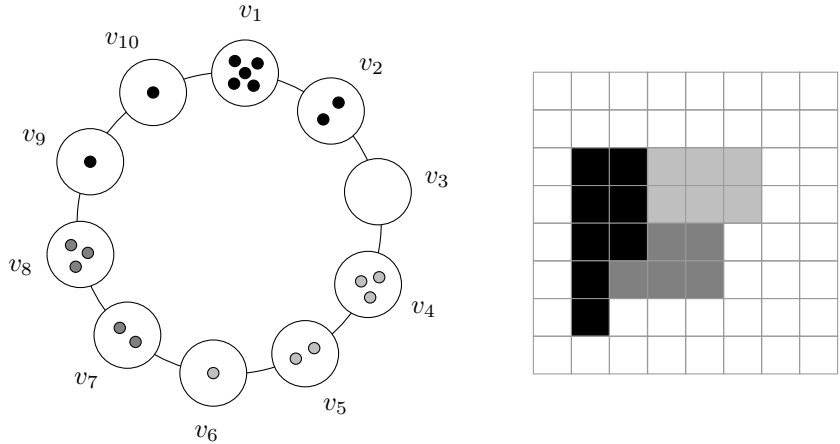
\begin{figure}[h!]
	\centering
	\begin{tikzpicture}[scale=1]
	\draw (0:2) arc (0:360:20mm);
	\foreach \phi in {1,...,10}{
		\node[state,fill=white] (\phi) at (120-360/10 * \phi:2cm) {};
		\node[] (\phi) at (120-360/10 * \phi:2.8cm) {$v_{\phi}$};	
	}
	\node [] at (120-360/10 * 1:2cm)[circle, fill=black,draw=black,inner sep=1.5pt] {};
	\node [] at (120-360/10+4 * 1:2.15cm)[circle, fill=black,draw=black,inner sep=1.5pt] {};
	\node [] at (120-360/10-4 * 1:2.15cm)[circle, fill=black,draw=black,inner sep=1.5pt] {};
	\node [] at (120-360/10+4 * 1:1.85cm)[circle, fill=black,draw=black,inner sep=1.5pt] {};
\node [] at (120-360/10-4 * 1:1.85cm)[circle, fill=black,draw=black,inner sep=1.5pt] {};
\node [] at (120-360/10 * 2:2.15cm)[circle, fill=black,draw=black,inner sep=1.5pt] {};
\node [] at (120-360/10 * 2:1.85cm)[circle, fill=black,draw=black,inner sep=1.5pt] {};

\node [] at (120-360/10 * 4:1.9cm)[circle, fill=gray!50,draw=black,inner sep=1.5pt] {};
\node [] at (120+4 -360/10* 4:2.1cm)[circle, fill=gray!50,draw=black,inner sep=1.5pt] {};
\node [] at (120-4-360/10 * 4:2.1cm)[circle, fill=gray!50,draw=black,inner sep=1.5pt] {};
\node [] at (120+4 -360/10* 5:2cm)[circle, fill=gray!50,draw=black,inner sep=1.5pt] {};
\node [] at (120-4-360/10 * 5:2cm)[circle, fill=gray!50,draw=black,inner sep=1.5pt] {};
\node [] at (120-360/10* 6:2cm)[circle, fill=gray!50,draw=black,inner sep=1.5pt] {};
\node [] at (120+4 -360/10* 7:2cm)[circle, fill=gray,draw=black,inner sep=1.5pt] {};
\node [] at (120-4-360/10 * 7:2cm)[circle, fill=gray,draw=black,inner sep=1.5pt] {};
\node [] at (120-360/10 * 8:1.9cm)[circle, fill=gray,draw=black,inner sep=1.5pt] {};
\node [] at (120+4 -360/10* 8:2.1cm)[circle, fill=gray,draw=black,inner sep=1.5pt] {};
\node [] at (120-4-360/10 * 8:2.1cm)[circle, fill=gray,draw=black,inner sep=1.5pt] {};
\node [] at (120-360/10 * 9:2cm)[circle, fill=black,draw=black,inner sep=1.5pt] {};
\node [] at (120-360/10 * 10:2cm)[circle, fill=black,draw=black,inner sep=1.5pt] {};

\begin{scope}[shift={(6,0)}]
\matrix[matrix of nodes, nodes={draw=black!40,minimum size=0.5cm}, nodes in empty cells,column sep=-\pgflinewidth,row sep=-\pgflinewidth](M){
	&              &              &                &                 &                &              &           \\
	&              &              &                &                 &                &              &           \\
	&|[fill=black]|&|[fill=black]|&|[fill=gray!50]|& |[fill=gray!50]|&|[fill=gray!50]|&              &           \\
	&|[fill=black]|&|[fill=black]|&|[fill=gray!50]|& |[fill=gray!50]|&|[fill=gray!50]|&              &           \\
	&|[fill=black]|&|[fill=black]|& |[fill=gray]|  &|[fill=gray]|    &                &              &           \\
	&|[fill=black]|& |[fill=gray]|& |[fill=gray]|  &|[fill=gray]|    &                &              &         \\
	&|[fill=black]|&              &                &                 &                &              &           \\
	&              &              &                &                 &                &              &           \\
};
\end{scope}
	\end{tikzpicture}
	\caption{An instance of the competing urn scheme on $C_{10}$ (left) and the competing growth process on $\Z^2$ with a bootstrapping effect (right). Starting at the SW corner, we may encode the perimeter of the occupied cluster on the right by the configuration of balls on the left. The growth of a side of the cluster on the right corresponds to the nucleation of a ball on the left.}\label{fig:mg_illu2}
\end{figure}

For the $K$-type competing growth model on $\Z^2$, with a finite number of initially occupied sites, we say that type $j$ \emph{survives} if infinitely many sites eventually become occupied by type $j$. In~\cite{ahlgrijanmor19} the authors showed that for this process consisting of two types, there is almost surely only a single surviving type. Moreover, the authors conjectured that the same holds also for $K\ge3$, which we here confirm.

\begin{thrm}\label{thm:plane}
For every $K\ge2$ and every finite initial configuration, the $K$-type competing growth model on $\Z^2$ has almost surely a single surviving type.
\end{thrm}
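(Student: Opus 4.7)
The plan is to obtain Theorem~\ref{thm:plane} as a corollary of Theorem~\ref{thm:urn} via the reduction from the $K$-type competing growth process on $\Z^2$ to the $K$-type competing urn scheme on a cycle, which was developed in~\cite{ahlgrijanmor19} in the two-type setting. Because this reduction is purely geometric and treats types as passive labels, the same coupling applies verbatim for $K\ge3$ once the cycle statement is available.

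First I would recall the encoding. Given a finite initial configuration on $\Z^2$, let $\mathcal{C}(t)$ denote the occupied region at time $t$ after the bootstrap rule has been run to completion. Since no vacant site ever has two occupied neighbours, the topological boundary of $\mathcal{C}(t)$ is a simple lattice polygon. Enumerate its maximal straight sides cyclically as $S_1,\ldots,S_N$, and let $j_i$ denote the (common) type of the cells lying along $S_i$. Place in the $i$-th urn of the cycle $C_N$ a number of balls of type $j_i$ equal to the length of $S_i$; this produces an initial configuration on $C_N$ for the $K$-type urn scheme, as illustrated in Figure~\ref{fig:mg_illu2}.

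Next I would verify that under this encoding the $\Z^2$ dynamics coincide with the urn scheme on $C_N$. A Poisson clock of rate $1$ is attached to each cell, so the total firing rate along $S_i$ equals the length of $S_i$, which matches the total firing rate of the balls in the $i$-th urn. When such a clock rings, the ensuing nucleation and its bootstrap cascade modify the polygon in such a way that a ball of type $j_i$ is effectively transferred to each of the neighbouring sides $S_{i-1}$ and $S_{i+1}$, annihilating with the resident type there if it differs. This is exactly the update rule for the $K$-type competing urn scheme on $C_N$, generalising~\eqref{eq:two-type}. As shown in~\cite{ahlgrijanmor19}, the number $N$ of sides is preserved under the evolution (a side shrinking to zero length produces an empty urn rather than deleting a vertex of the cycle), so the coupling remains consistent for all time. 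Theorem~\ref{thm:urn} then gives that the induced urn scheme on $C_N$ almost surely ends with a single surviving type, and since every nucleation in the growth process corresponds to a firing in the urn scheme under the coupling, the set of surviving types in the growth process is contained in the set of surviving types on $C_N$, proving Theorem~\ref{thm:plane}.

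The main obstacle is the equivariance step: one must verify in detail that each nucleation on $\Z^2$, together with the bootstrap cascade it triggers, corresponds to exactly one elementary step of the urn scheme on $C_N$, including the bookkeeping at convex and concave corners where a side may be created, shrunk, or reduced to zero length. This case analysis is carried out in~\cite{ahlgrijanmor19} for $K=2$, and because type labels are merely carried along passively by the coupling, no modification is needed to handle $K\ge3$.
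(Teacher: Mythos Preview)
Your reduction has a genuine gap: the claim that the number $N$ of polygon sides is preserved under the dynamics is false. Consider a rectangular occupied region split vertically into two tribes of types $A$ and $B$, so the boundary has $4$ sides and the two tribe breaks lie on straight segments. A single nucleation on the top side of the $A$-half triggers a bootstrap cascade that fills only the row above the $A$-half (it stops at the tribe break because the cell above the break has one $A$-neighbour and one $B$-neighbour), producing an L-shaped region with $6$ sides. Thus sides can be created, not only destroyed, and your fixed cycle $C_N$ cannot absorb this. The parenthetical ``a side shrinking to zero length produces an empty urn'' handles only one direction.

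The paper's encoding avoids this by indexing urns not by sides but by stopping points that are either corners \emph{or} tribe breaks, inserting a zero vector whenever a tribe break falls on a straight segment. Since every inside corner is necessarily a tribe break (two same-type sides meeting at an inside corner would bootstrap-fill immediately), one has $m\le k$ inside corners, $m+4$ outside corners, and $k-m$ tribe breaks on straight segments, giving exactly $(2m+4)+2(k-m)=2k+4$ urns. This count depends only on the number of tribes $k$, not on the polygon shape, so the cycle length is genuinely invariant as long as $k$ does not drop. The coupling is then run only until a tribe dies, at which point one restarts with the new (smaller) $k$; Theorem~\ref{thm:urn} guarantees this happens in finite time almost surely, and iterating finishes the proof. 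Your attempt to couple for all time on a single fixed cycle cannot be repaired without adopting this tribe-based padding.
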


We shall, in Section~\ref{sec:plane}, deduce Theorem~\ref{thm:plane} from Theorem~\ref{thm:urn}. In brief, the result follows from the observation, from~\cite{ahlgrijanmor19}, that competition between $K$ types on $\Z^2$ can be encoded as a $K$-type competing urn scheme on a cycle of a certain length, by following the perimeter of the occupied cluster; see Figure~\ref{fig:mg_illu2}.

\subsection{Notation and outline of the paper}

We shall primarily think of vectors as rows, and multiply matrices by vectors from the left. However, for ease of notation, we shall occasionally think of the vector $Z(t)$ as a column vector, and write $vZ(t)$ for the (scalar) product of $v$ and $Z(t)$. For a vector $v$ in $\mathbb{R}^N$, we shall use the notation $|v|:=(\sum_{j=1}^Nv_j^2)^{1/2}$ and $\|v\|:=\sum_{k=1}^N|v_k|$ for its $\ell_2$- and $\ell_1$-norm, respectively.

We proceed, in Section~\ref{sec:eigenstructure}, to examine the eigensystem of the reinforcement matrix associated to the auto-annihilative growth process on $C_N$. Based on this analysis we define, in Section~\ref{sec:martingale}, a series of martingales that we show are convergent almost surely and in $L^2$. The martingale limits are analysed further in Section~\ref{sec:limit}, leading to a proof of Theorem~\ref{thm:signed}. Theorem~\ref{thm:urn} is then proved in Section~\ref{sec:proof}, and Theorem~\ref{thm:plane} is derived as a corollary in Section~\ref{sec:plane}. We end the paper by mentioning a few connections to open problems in Section~\ref{sec:open}.

\section{Eigenstructure of the reinforcement matrix}\label{sec:eigenstructure}

We will in this section describe the eigensystem of the $N\times N$-reinforcement matrix $A^*$, which will be relevant for our later analysis. We do this by first describing the adjacency matrix of $C_{2N}$, the cycle of length $2N$, and deduce properties of $A^*$ therefrom. We denote by $\tilde A$ the $2N\times2N$-matrix describing the adjacency structure of $C_{2N}$.

Both $A^*$ and $\tilde A$ are real-valued and symmetric, so their eigenvalues are real. The matrix $\tilde A$ is, in addition, a circulant matrix, which is a matrix in which each column is obtained by the shifting all elements of the previous column one step down, in a cyclic fashion.\footnote{Equivalently, every row is obtained by shifting the previous row one step right.} Hence, the matrix is completely determined by its first column. Circulant matrices have the notable property that they all have the same set of eigenvectors.\footnote{We shall here be concerned with left-eigenvectors and treat vectors as rows.}

Clearly, the all ones vector is a (left-)eigenvector of the matrix, since the elements of every column sum up to the same number. The remaining eigenvectors can be expressed in terms of roots of unity. Let $\omega:=e^{\pi i/N}$, and for $k=0,1,\ldots,2N-1$ set
$$
\tilde v^{(k)}:=\big(1,\omega^k,\omega^{2k},\ldots,\omega^{(2N-1)k}\big).
$$
Indeed, note that multiplying $\tilde A$ by $\tilde v^{(k)}$ from the left results in a vector whose $j$th element equals
$$
(\tilde v^{(k)}\tilde A)_j=\omega^{(j-2)k}+\omega^{jk}=[\omega^{-k}+\omega^k]\omega^{(j-1)k}=2\cos(\pi k/N)(\tilde v^{(k)})_j.
$$
Hence, $\tilde v^{(k)}$ is indeed a (left-)eigenvector of $\tilde A$ with eigenvalue $\tilde\lambda_k:=2\cos(\pi k/N)$.

We note that the largest positive eigenvalue of $\tilde A$ is $\tilde\lambda_0=2$ and that it comes with multiplicity one. Also $\tilde\lambda_N=-2$ has multiplicity one, but all other eigenvalues have (algebraic) multiplicity two.

The connection between the reinforcement matrix $A^*$ and the adjacency matrix $\tilde A$ is the following: A vector $v\in\R^N$ is an eigenvector of $A^*$ if and only if $(v,-v)\in\R^{2N}$ is an eigenvector for $\tilde A$. This connection reflects the fact that particles completing a lap clockwise, in the auto-annihilative process, returns with their signs negated, and can be thought of as an expression of the geometry of a (discrete) M\"obius strip.
The connection is easily verified by decomposing $\tilde A$ into blocks. Let $\tilde B$ be the $N\times N$-matrix with entries
$$
\tilde B_{i, j}=\begin{cases}
1 & \text{ if } \abs{i-j}=1\\
0 & \text{ otherwise }
\end{cases}
$$
and let $\tilde C$ be the $N\times N$-matrix with zero entries except at the SW and NE corners which are 1. We then have
$$
\tilde A=\begin{pmatrix}
\tilde B & \tilde C\\
\tilde C & \tilde B
\end{pmatrix}
$$
and the reinforcement matrix satisfies $A^*=\tilde B-\tilde C$. From here it follows that
$$
(v,-v)\tilde A=(v\tilde B-v\tilde C,v\tilde C-v\tilde B)=(vA^*,-vA^*),
$$
and hence that $vA^*=\lambda v$ if and only if $(v,-v)\tilde A=\lambda (v,-v)$.

Examining the eigenvectors of $\tilde A$ we find that
\begin{equation*}
(\tilde v^{(k)})_{N+j}=\omega^{(N+j)k}=\omega^{Nk}\omega^{jk}=\left\{
\begin{aligned}
-\omega^{jk} &&& \text{if $k$ odd},\\
\omega^{jk} &&& \text{if $k$ even}.
\end{aligned}
\right.
\end{equation*}
That is, for $k$ odd the vector $\tilde v^{(k)}$ is of the form $(v,-v)$, and thus the vectors
$$
u^{(k)}:=\big(1,\omega^{(2k-1)},\omega^{2(2k-1)},\ldots,\omega^{(N-1)(2k-1)}\big)
$$
for $k=1,2,\ldots,N$ are eigenvectors of $A^*$ corresponding to the eigenvalues
$$
\lambda_k:=2\cos(\pi(2k-1)/N).
$$
We observe that the largest positive eigenvalue is $\lambda_1=\lambda_{N}=2\cos(\pi/N)$ and occurs with (algebraic) multiplicity two. If $N$ is odd, then $-2$ is an eigenvalue of multiplicity one, but otherwise all other eigenvalues have (algebraic) multiplicity two.

The eigenvectors $u^{(k)}$ are of course complex-valued. Since $A^*$ is real-valued with real eigenvalues, also the real and imaginary parts of $u^{(k)}$ are eigenvectors associated to $\lambda_k$.\footnote{If $N$ is odd, then $u^{((N+1)/2)}$ is real, but all other vectors have nontrivial real and imaginary parts.}
We note, in particular, that the vectors $v^{(1)}$ and $v^{(2)}$, defined in~\eqref{eq:first_vectors}, are of this form, namely that
\begin{equation}\label{eq:main_vectors}
\begin{aligned}
v^{(1)}=\textup{Re}\,u^{(1)}&=\big(1,\cos(\pi/N),\cos(2\pi/N),\ldots,\cos((N-1)\pi/N)\big)\\
v^{(2)}=\textup{Im}\,u^{(1)}&=\big(0,\sin(\pi/N),\sin(2\pi/N),\ldots,\sin((N-1)\pi/N)\big)
\end{aligned}
\end{equation}
are eigenvectors that spann the eigenspace of the largest positive eigenvalue of $A^*$.

We summarise the information required for subsequent sections.

\begin{prop}\label{prop:largest_eigenv}
The real and imaginary parts of the vectors $u^{(k)}$, for $k\le\lfloor(N+1)/2\rfloor$, form an orthogonal basis in $\R^N$ consisting of vectors of equal length $\sqrt{N/2}$. (If $N$ is odd, then $\textup{Im}\,u^{((N
+1)/2)}$ is the zero-vector and is excluded.)
The largest (positive) eigenvalue of the reinforcement matrix $A^*$ is given by
$$\lambda^\ast=2\cos(\pi/N),$$
and the eigenspace of $\lambda^*$, spanned by the vectors $v^{(1)}$ and $v^{(2)}$, takes the form
$$
\Big\{\big(r\cos\big((j-1)\pi/N+\theta\big)\big)_{j=1}^N:r\in\R,\theta\in[-\pi/2,\pi/2)\Big\}.
$$
\end{prop}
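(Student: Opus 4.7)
The plan is to verify each assertion of the proposition using the groundwork laid in the preceding discussion. The preceding text already exhibits $N$ candidate eigenvectors $u^{(k)}$, $k=1,\ldots,N$, of $A^*$ with associated eigenvalues $\lambda_k=2\cos\p{\pi(2k-1)/N}$. A key observation to exploit is that since $\omega^{2N}=1$, we have $u^{(N-k+1)}=\overline{u^{(k)}}$, so for each $k\le\lfloor(N+1)/2\rfloor$ both $\textup{Re}\,u^{(k)}$ and $\textup{Im}\,u^{(k)}$ are real eigenvectors associated to $\lambda_k$. This in particular shows that the spectrum is exhausted by $\{\lambda_k:1\le k\le\lfloor(N+1)/2\rfloor\}$ and identifies $\lambda^*=\lambda_1=\lambda_N=2\cos(\pi/N)$ as the largest eigenvalue, occurring with multiplicity two.

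For the orthogonality and length claim, the workhorse is the geometric sum $\sum_{j=0}^{N-1}e^{ij\alpha}$, which vanishes whenever $e^{iN\alpha}=1$ but $e^{i\alpha}\ne 1$. Applied with $\alpha=2\pi(2k-1)/N$, this condition is met precisely when we are not in the degenerate case $N$ odd and $k=(N+1)/2$. Taking real and imaginary parts of the sum yields
\begin{align*}
\textup{Re}(u^{(k)})\cdot\textup{Im}(u^{(k)}) &=\tfrac12\sum_{j=0}^{N-1}\sin\p{2\pi j(2k-1)/N}=0,\\
|\textup{Re}(u^{(k)})|^2-|\textup{Im}(u^{(k)})|^2 &=\sum_{j=0}^{N-1}\cos\p{2\pi j(2k-1)/N}=0,
\end{align*}
while $|\textup{Re}(u^{(k)})|^2+|\textup{Im}(u^{(k)})|^2=|u^{(k)}|^2=N$, giving each vector squared-length $N/2$. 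Orthogonality between real/imaginary parts belonging to \emph{distinct} eigenvalues follows because $A^*$ is real symmetric. Counting, the pairs contribute $2\lfloor N/2\rfloor$ vectors, plus one extra when $N$ is odd (the real vector $\textup{Re}\,u^{((N+1)/2)}$), for a total of $N$ orthogonal vectors of length $\sqrt{N/2}$, hence a basis of $\R^N$.

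Finally, for the description of the eigenspace of $\lambda^*$: a general element is $av^{(1)}+bv^{(2)}=\big(a\cos((j-1)\pi/N)+b\sin((j-1)\pi/N)\big)_{j=1}^N$. Using the identity $a\cos x+b\sin x=r\cos(x+\theta)$ with $r=\sqrt{a^2+b^2}$ and $\cos\theta=a/r$, $\sin\theta=-b/r$, one obtains the stated parametrization. The observation that substituting $(r,\theta)\mapsto(-r,\theta+\pi)$ leaves the vector invariant allows us to restrict $\theta$ to any half-open interval of length $\pi$, and we take $[-\pi/2,\pi/2)$, while letting $r$ range over all of $\R$.

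The proof is largely bookkeeping; the only point requiring care is the degenerate case $N$ odd with $k=(N+1)/2$, where the imaginary part of $u^{(k)}$ is zero and must be excluded from the basis (and the associated geometric sum is genuinely nonzero). This is already flagged in the proposition and causes no difficulty beyond the parity case split.
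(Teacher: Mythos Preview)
Your proof is correct and follows essentially the same route as the paper's: compute the inner product and norms of $\textup{Re}\,u^{(k)}$ and $\textup{Im}\,u^{(k)}$ directly, invoke symmetry of $A^*$ for orthogonality across distinct eigenspaces, and appeal to the harmonic addition formula for the eigenspace description. The only real difference is cosmetic: you package the trigonometric computations into a single vanishing geometric sum $\sum_{j=0}^{N-1}e^{ij\alpha}$ (reading off real and imaginary parts), whereas the paper argues separately via the odd symmetry of $\cos(x)\sin(x)$, the double-angle formula, and Lagrange's identities. Your version is arguably tidier, but the content is the same.
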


\begin{proof}
First note that the scalar product of the vectors $v^{(1)}$ and $v^{(2)}$ equals
$$
v^{(1)}\cdot v^{(2)}=\sum_{j=1}^{N-1}\cos(j\pi/N)\sin(j\pi/N)=0,
$$
since $\cos(x+\pi/2)\sin(x+\pi/2)$ is an odd function. Hence, $v^{(1)}$ and $v^{(2)}$ are orthogonal. An analogous argument shows that the real and imaginary parts of any vector $u_k$ are orthogonal. Consequently, (with the exception of the eigenvalue $-2$ in the case that $N$ is odd,) the eigenspace of each eigenvalue has dimension two, and so the (geometric) multiplicity of each eigenvalue is two. Moreover, since $A^*$ is symmetric, vectors belonging to different eigenspaces are orthogonal. The real and imaginary parts of the vectors $u_k$ thus form an orthogonal basis that span $\R^N$.

Second, note that $|v^{(1)}|^2+|v^{(2)}|^2=N$ as of the Pythagorean trigonometric identity. Moreover, using the cosine double angle formula, we further find that
$$
|v^{(1)}|^2-|v^{(2)}|^2=\sum_{j=1}^N\big[\cos^2\big((j-1)\pi/N\big)-\sin^2\big((j-1)\pi/N\big)\big]=\sum_{j=1}^N\cos\big(2(j-1)\pi/N\big)=0,
$$
e.g.\ using Lagrange's trigonometric identities. Hence, $v^{(1)}$ and $v^{(2)}$ have both length $\sqrt{N/2}$. An analogous argument shows that the real and imaginary parts of any vector $u^{(k)}$ have length $\sqrt{N/2}$, verifying the first claim of the proposition.

Since the eigenspace of $\lambda^*$ is spanned by the vectors $v^{(1)}$ and $v^{(2)}$, we will need to verify that every linear combination of these vectors is of the form described above. More precisely, we shall show that for every $a,b\in\R$ there exist $r\in\R$ and $\theta\in[-\pi/2,\pi/2)$ such that
\begin{equation}\label{eq:lincomb}
a\cos(x)+b\sin(x)=r\cos(x+\theta).
\end{equation}
In case that $a=0$ this is immediate since $b\sin(x)=b\cos(x-\pi/2)$. For $a\neq0$ the statement follows from the harmonic addition formula with
\begin{equation}\label{eq:harmonic_addition}
r=\sign(a)\sqrt{a^2+b^2}\quad\text{and}\quad\theta=\arctan(-b/a).
\end{equation}
Indeed, the mapping $(a,b)\mapsto(r,\theta)$ forms a bijection from $\R^2$ to $\R\times[-\pi/2,\pi/2)$.
\end{proof}

Since we shall come back to the fact that~\eqref{eq:harmonic_addition} solves~\eqref{eq:lincomb} below, let us, for the sake of completeness, verify that it indeed is true. We first note that $a\cos(x)$ and $b\sin(x)$ are the real parts of the complex numbers $ae^{ix}$ and $-bie^{ix}$, respectively. We further note that for any complex number $z=re^{i\theta}$, the real part of $ze^{ix}$ is the cosine function
$$
\textup{Re}[re^{i\theta}e^{ix}]=r\cos(x+\theta).
$$
Since the sum of $a\cos(x)$ and $b\sin(x)$ is the real part of \emph{some} number of the form $ze^{ix}$, it follows that~\eqref{eq:lincomb} must hold for some $r\in\R$ and $\theta\in[-\pi/2,\pi/2)$.

We move on to express $r$ and $\theta$ in terms of $a$ and $b$. We thus seek to find the modulus and argument of $a-bi$. Its modulus is $\sqrt{a^2+b^2}$. For $a>0$ its argument $\varphi$ satisfies $\tan(\varphi)=-b/a$, so that $\varphi=\arctan(-b/a)$. For $a<0$ we instead get $\varphi=\arctan(-b/a)+\pi$. That is, the argument of $a-bi$ equals $\arctan(-b/a)+\pi\ind_{\{a<0\}}$, so that
$$
a-bi=\sqrt{a^2+b^2}\exp\big(i\big(\arctan(-b/a)+\pi\ind_{\{a<0\}}\big)\big)=\sign(a)\sqrt{a^2+b^2}e^{i\arctan(-b/a)},
$$
and~\eqref{eq:harmonic_addition} follows.
(An alternative derivation uses the cosine angle addition formula.)

\section{Martingale analysis}\label{sec:martingale}

Having examined the eigenstructure of the reinforcement matrix in the preceding section, we proceed with an analysis of the martingales that arise therefrom. Given an eigenvalue $\lambda$ of the reinforcement matrix $A^*$ and any vector $v$ in the eigenspace of $\lambda$, let
$$
M_v(t):=e^{-\lambda t}vZ(t)\quad\text{for }t\ge0.
$$
Here and below, we think of $Z(t)$ as a column vector, so that $vZ(t)$ is equal to the scalar product $v\cdot Z(t)$.
In the case that $\lambda=\lambda^*$, the largest (positive) eigenvalue of $A^*$, and we want to emphasise this fact, we shall then use the notation
$$
M^\ast_v(t):=e^{-\lambda^*t}vZ(t)\quad\text{for }t\ge0.
$$
Throughout we denote by $(\mathcal{F}_t)_{t\ge0}$ the natural filtration where $\mathcal{F}_t=\{Z(s):s\le t\}$.

Below we shall be interested to study the process $(M^\ast_v(t))_{t\ge0}$ for arbitrary vectors $v\in\R^N$, which is defined analogously. However, at the moment we shall focus on eigenvectors, in which case the associated process is a martingale.

\begin{lemma}\label{lemma:mg}
For every eigenvalue $\lambda$ of the reinforcement matrix, and any vector $v$ in the eigenspace of $\lambda$, the stochastic process $(M_v(t))_{t\ge0}$ is a martingale with respect to $(\mathcal{F}_t)_{t\ge0}$.
\end{lemma}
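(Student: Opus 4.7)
The plan is to identify the infinitesimal generator of the Markov process $(Z(t))_{t\ge0}$ and verify that the linear functional $z\mapsto v\cdot z$ is an eigenfunction with eigenvalue $\lambda$; the martingale property of $M_v$ then follows from the exponential rescaling. The approach is standard for continuous-time Markov jump processes, and the eigenvector hypothesis on $v$ is the only place the structure of $A^*$ enters.

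First I would note that $Z$ is a pure jump Markov process on $\Z^N$: at each vertex $k$, each of the $|Z_k(t)|$ particles rings at rate $1$ and the configuration changes according to~\eqref{eq:multi-type}. Hence the generator, applied to a function $f\colon\Z^N\to\R$, is
$$
\mathcal{L}f(z)=\sum_{k=1}^N|z_k|\bigl[f\bigl(z+\sign(z_k)A^*_k\bigr)-f(z)\bigr].
$$
Applied to the linear function $f(z)=v\cdot z$, the increment inside the bracket simplifies to $\sign(z_k)\,v\cdot A^*_k$, giving
$$
\mathcal{L}f(z)=\sum_{k=1}^N|z_k|\sign(z_k)\,(v A^*)_k=\sum_{k=1}^N z_k(vA^*)_k=\lambda\,v\cdot z=\lambda f(z),
$$
where I used that $v$ is a left-eigenvector of the (symmetric) matrix $A^*$, so $vA^*=\lambda v$ and hence $v\cdot A^*_k=(vA^*)_k=\lambda v_k$.

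From here the martingale property is a routine consequence. By the standard Dynkin formula for Markov jump processes, $vZ(t)-vZ(0)-\lambda\int_0^t vZ(s)\,ds$ is a local martingale; an integration-by-parts (or direct generator computation applied to the time-dependent function $(t,z)\mapsto e^{-\lambda t}v\cdot z$) then yields that $M_v(t)=e^{-\lambda t}vZ(t)$ is a local martingale. Equivalently, one can argue infinitesimally: $\frac{d}{dt}\E[vZ(t)\mid\mathcal{F}_s]=\lambda\,\E[vZ(t)\mid\mathcal{F}_s]$ for $t\ge s$, so $\E[vZ(t)\mid\mathcal{F}_s]=e^{\lambda(t-s)}vZ(s)$, which is exactly $\E[M_v(t)\mid\mathcal{F}_s]=M_v(s)$.

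The only genuine care needed is integrability, in order to upgrade the local martingale to a true martingale. This is not a real obstacle: since annihilations can only decrease the number of particles, the total particle count $\|Z(t)\|_1$ is stochastically dominated by the population of a continuous-time multi-type branching process on $C_N$ in which every particle sends copies to both neighbours at rate $1$ without any interaction. That branching process has mean matrix equal to the adjacency matrix of $C_N$, whose leading eigenvalue is $2$, so $\E\|Z(t)\|_1\le C e^{2t}<\infty$ for every $t$. Combined with the crude bound $|M_v(t)|\le e^{-\lambda t}\|v\|_\infty\|Z(t)\|_1$, this gives $\E|M_v(t)|<\infty$ uniformly on bounded intervals, so $(M_v(t))_{t\ge0}$ is a genuine martingale with respect to $(\mathcal{F}_t)_{t\ge0}$.
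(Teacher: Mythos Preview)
Your proof is correct and follows essentially the same route as the paper: compute the infinitesimal rate of change of $vZ(t)$, use that $vA^*_k=\lambda v_k$ to obtain the ODE $\frac{d}{dt}E[vZ(t)\mid\mathcal{F}_s]=\lambda\,E[vZ(t)\mid\mathcal{F}_s]$, and conclude the martingale property from the Markov structure. If anything, your version is slightly more careful than the paper's, since you make the generator explicit and justify integrability via domination by a branching process, whereas the paper leaves these points implicit.
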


\begin{proof}
We will show that $E[vZ(t)]=e^{\lambda t}vZ(0)$. Due to time homogeneity and the Markov character of the process it will readily follow that
$$
e^{-\lambda(t+s)}E[vZ(t+s)|\mathcal{F}_s]=e^{-\lambda(t+s)}E[vZ(t+s)|Z(s)]=e^{-\lambda s}vZ(s),
$$
so the martingale property holds.

Almost surely, no two clocks ring at the same time. If at time $t$ a clock rings at node $k$, then $vZ(t)$ jumps by $\pm vA^*_k$, where $A^*_k$ is the $k$th column of the matrix $A^*$ and the sign depends on the colour of the ball. Since $v$ is an eigenvector of $A^*$, we have $vA^*=\lambda v_k$, where $v_k$ is the $k$th element of $v$. The number of balls (signs included) at node $k$ is $Z_k(t)$, and each ball rings with intensity $1$, so the expected rate of change equals
$$
\frac{d}{dt}E[vZ(t)]=\sum_{k=1}^NE[vA^*_kZ_k(t)]=\sum_{k=1}^NE[\lambda v_kZ_k(t)]=\lambda E[vZ(t)].
$$
This differential equation has the general solution $E[vZ(t)]=Ce^{\lambda t}$, and the initial condition $E[vZ(0)]=vZ(0)$ gives $E[vZ(t)]=e^{\lambda t}vZ(0)$, as required.
\end{proof}

We next derive a bound on the quadratic variation of the above martingales.

\begin{lemma}\label{lemma:L2bound}
For every eigenvalue $\lambda$ of the reinforcement matrix, every vector $v$ in the eigenspace of $\lambda$ and initial configuration $x\in\Z^N$, we have for all $t\ge s\ge 0$ that
$$
E[M_v(t)^2]-E[M_v(s)^2]\le4|v|^2\|x\|\int_s^t e^{-2(\lambda-1)u}\,du.
$$
\end{lemma}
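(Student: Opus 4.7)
The plan is to apply the standard identity $E[M_v(t)^2] - E[M_v(s)^2] = E[\langle M_v\rangle_t - \langle M_v\rangle_s]$ for the martingale from Lemma~\ref{lemma:mg}, reducing the task to computing and bounding the predictable quadratic variation. Since $M_v(u) = e^{-\lambda u}vZ(u)$ is a pure-jump process, and a clock at node $k$ rings at rate $|Z_k(u-)|$ causing (by the eigenvector identity $vA^*_k = \lambda v_k$ used in the proof of Lemma~\ref{lemma:mg}) a jump of $M_v$ of size $\pm e^{-\lambda u}\lambda v_k$, summing squared jumps weighted by intensities yields
$$\langle M_v\rangle_t - \langle M_v\rangle_s = \lambda^2\int_s^t e^{-2\lambda u}\sum_{k=1}^N v_k^2\,|Z_k(u)|\, du.$$

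The key remaining step is to bound $E[|Z_k(u)|]$. I would do this by coupling $Z$ with an analogous process $\bar Z$ driven by the same Poisson clocks but in which opposing signs are allowed to coexist rather than annihilate. Since annihilation only removes particles, the coupling produces $|Z_k(u)| \le \bar Z_k^+(u) + \bar Z_k^-(u)$ pointwise, where $\bar Z_k^\pm$ counts the particles of each sign at $k$ in the unhindered process. The total population $\bar W(u) := \sum_k\bigl(\bar Z_k^+(u)+\bar Z_k^-(u)\bigr)$ is a pure birth process in which every clock ring adds two particles, so $\tfrac{d}{du}E[\bar W(u)] = 2E[\bar W(u)]$ and hence $E[\bar W(u)] = \|x\|e^{2u}$.

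Combining the two steps with the trivial inequality $v_k^2 \le |v|^2$ for every $k$,
$$E[\langle M_v\rangle_t - \langle M_v\rangle_s]\ \le\ \lambda^2 |v|^2\,\|x\|\int_s^t e^{-2(\lambda-1)u}\, du,$$
and since every eigenvalue of $A^*$ is of the form $2\cos\theta$ we have $\lambda^2 \le 4$, yielding the stated inequality. The only real subtlety I anticipate is making the coupling in the second step precise (verifying formally that auto-annihilation cannot increase any per-site count), together with a standard localisation argument to justify the quadratic variation identity before square-integrability is in hand; the remainder is routine jump-process bookkeeping.
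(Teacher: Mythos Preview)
Your proposal is correct and follows essentially the same route as the paper: identify the jump sizes $\pm e^{-\lambda u}\lambda v_k$ via the eigenvector relation, pass to (the expectation of) quadratic variation, and bound the total particle count by the non-annihilating branching process with mean $e^{2u}\|x\|$, then use $\lambda^2\le4$ and $v_k^2\le|v|^2$. The only cosmetic differences are that the paper works with the optional quadratic variation $[M_v]$ rather than $\langle M_v\rangle$ and establishes square-integrability up front (via the same Galton--Watson domination) instead of localising.
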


\begin{proof}
Since each branching event results in at most two new balls, the process $(Z(t))_{t\ge0}$ is dominated by a Galton-Watson (or Bienaym\'e) process, and therefore clearly square integrable. By Lemma~\ref{lemma:mg}, the sequence $(M_v(t))_{t\ge0}$ is a martingale. Its quadratic variation $[M_v](t)$ is defined as the limit, in probability,
$$
[M_v](t):=M_v(0)^2+\lim_{|\mathcal{P}_n|\to0}\sum_{k=1}^n\big(M_v(t_k)-M_v(t_{k-1})\big)^2,
$$
where $\mathcal{P}_n$ is some sequence of partitions of $[0,t]$ with mesh tending to zero. It is well-known that this limit is well-defined and that $(M_v(t)^2-[M_v](t))_{t\ge0}$ is again a martingale which vanishes at $t=0$; see e.g.~\cite[Corollary~2 to Theorem~II.27]{protter90}. In particular, it follows that
\begin{equation}\label{eq:quadid}
E[M_v(t)^2]=E\big[[M_v](t)\big].
\end{equation}

Since almost every realization of $M_v(t)$ has piecewise smooth trajectories, it follows by~\cite[Theorems~II.26 and~II.28]{protter90} that $[M_v](t)$ is a pure jump process, and that
$$
[M_v](t)=\sum_{0\le u\le t}\big(\Delta M_v(u)\big)^2=\sum_{0\le u\le t}\big(M_v(u)-M_v(u-)\big)^2.
$$
That is, if at time $t$ a clock at position $k$ rings, then $[M_v](t)$ jumps by (this time the sign of the ball is irrelevant)
\begin{equation}\label{eq:Delta}
\Delta[M_v](t)=(\Delta M_v(t))^2=e^{-2\lambda t}(vA^*_k)^2=e^{-2\lambda t}\lambda^2v_{k}^2,
\end{equation}
where again $A^*_k$ denotes the $k$th column of $A^*$ and $v_{k}$ the $k$th element of $v$. Consequently, since clocks ring at intensity $1$ and at $k$ there are $|Z_k(t)|$ balls,
\begin{equation}\label{eq:quaddiff}
\frac{d}{dt}E\big[[M_v](t)\big]=\sum_{k=1}^Ne^{-2\lambda t}\lambda^2v_{k}^2E|Z_k(t)|\le4|v|^2e^{-2\lambda t}E\bigg[\sum_{k=1}^N|Z_k(t)|\bigg].
\end{equation}
Since the total number of balls in $Z(t)$ is dominated by a Bienaym\'e-Galton-Watson process with a deterministic offspring distribution resulting in two children, we have
$$
E\bigg[\sum_{k=1}^N|Z_k(t)|\bigg]\le e^{2t}\sum_{k=1}^N|Z_k(0)|=e^{2t}\|x\|.
$$
Integrating~\eqref{eq:quaddiff} over the interval $[s,t]$ leads to the bound
$$
E\big[[M_v](t)\big]-E\big[[M_v](s)\big]\le4|v|^2\|x\|\int_s^t e^{-2(\lambda-1)u}\,du.
$$
The lemma now follows from~\eqref{eq:quadid}.
\end{proof}

From the bound on the quadratic variation we may conclude boundedness, and thus almost sure and $L^2$ convergence, of the martingales associated to the largest eigenvalue.

\begin{lemma}\label{lemma:L2mg}
Suppose that $N\ge4$. Then, for every nonzero vector $v$ in the eigenspace of $\lambda^*$ and every nonzero initial configuration $x\in\Z^N\setminus\{0\}$, the process $(M^*_v(t))_{t\ge0}$ converges almost surely and in $L^2$ to a limiting random variable $W=W(v,x)$ with finite mean and variance. Moreover, $P\big(W(v,x)\neq0\big)>0$.
\end{lemma}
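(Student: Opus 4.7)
The plan is to deduce convergence from $L^2$-boundedness, and then to deduce non-triviality by computing $E[W^2]$ via the quadratic variation formula. First, by Lemma~\ref{lemma:mg}, $(M^*_v(t))_{t\ge 0}$ is a martingale. The key point for convergence is that when $N\ge 4$ we have
$$
\lambda^* \;=\; 2\cos(\pi/N) \;>\; 2\cos(\pi/3) \;=\; 1,
$$
so the integral $\int_0^\infty e^{-2(\lambda^*-1)u}\,du = \frac{1}{2(\lambda^*-1)}$ is finite. Applying Lemma~\ref{lemma:L2bound} with $s=0$ then gives $\sup_{t\ge 0} E[M^*_v(t)^2] < \infty$. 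An $L^2$-bounded martingale converges both almost surely and in $L^2$ (Doob), yielding the limit $W = W(v,x)$ with $E[W] = v\cdot x$ and finite variance.

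For the assertion $P(W\neq 0) > 0$, I would show the stronger statement $E[W^2] > 0$. Revisiting the proof of Lemma~\ref{lemma:L2bound}, the identity $E[M^*_v(t)^2] = E[[M^*_v](t)]$ combined with the jump formula \eqref{eq:Delta} and the fact that clocks at position $k$ ring at rate $|Z_k(t)|$ integrates to
\begin{equation*}
E[M^*_v(t)^2] \;=\; (v\cdot x)^2 + \int_0^t \sum_{k=1}^N e^{-2\lambda^* u}(\lambda^*)^2 v_k^2\, E|Z_k(u)|\,du.
\end{equation*}
By the $L^2$ convergence established above, passing $t\to\infty$ gives $E[W^2]$ equal to the right-hand side with $t=\infty$. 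It therefore suffices to show that the integrand is strictly positive on some interval.

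For this final step, note that by Proposition~\ref{prop:largest_eigenv} any nonzero $v$ in the eigenspace of $\lambda^*$ has components $v_j = r\cos\bigl((j-1)\pi/N + \theta\bigr)$ with $r\neq 0$. If two distinct components $v_{j_1}, v_{j_2}$ both vanished, then the arguments $(j_1-1)\pi/N + \theta$ and $(j_2-1)\pi/N + \theta$ would both be odd multiples of $\pi/2$, forcing $(j_2-j_1)/N$ to be an integer, which is impossible for distinct indices in $\{1,\dots,N\}$. Hence at least $N-1$ of the coordinates of $v$ are nonzero, so we may fix some $k$ with $v_k\neq 0$. For this $k$ and any $u>0$, one has $E|Z_k(u)| > 0$, since starting from the (nonzero) initial configuration there is an explicit positive-probability event consisting of a finite sequence of Poisson clock rings that transports a particle from an initially occupied vertex to $k$ within time $u$ without any annihilation in between. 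The integrand is therefore strictly positive for all $u>0$, so $E[W^2]>0$, and $P(W\neq 0)>0$ follows.

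I do not foresee a major obstacle; the two items that deserve the most care are the combinatorial observation that no two coordinates of an eigenvector of $\lambda^*$ can simultaneously vanish, and the (routine but essential) positivity of $E|Z_k(u)|$ for $u>0$, which could alternatively be obtained by domination from below by a pure branching process restricted to a path propagating a single-signed particle.
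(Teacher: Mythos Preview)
Your proposal is correct and follows essentially the same approach as the paper: $L^2$-boundedness via the integral bound from Lemma~\ref{lemma:L2bound} (using $\lambda^*>1$ for $N\ge4$) gives convergence, and non-triviality is obtained by showing $E[W^2]>0$ through the quadratic variation identity~\eqref{eq:quadid} and the jump formula~\eqref{eq:Delta}. The only difference is cosmetic: the paper argues directly that $E[M^*_v(1)^2]>0$ (either $M^*_v(0)\neq0$, or a single nucleation at some node with $v_k\neq0$ contributes positively to $[M^*_v]$), whereas your detour through the structure of the eigenvectors---showing at most one coordinate of $v$ can vanish---is correct but more than is needed, since the existence of \emph{one} nonzero coordinate already suffices.
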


\begin{proof}
By Lemma~\ref{lemma:mg}, the process $(M^*_v(t))_{t\ge0}$ is a martingale. For $N\ge4$ we have that $\lambda^*=2\cos(\pi/N)>1$. Consequently, by Lemma~\ref{lemma:L2bound},
\begin{equation}\label{eq:Mtilde}
E[M^*_v(t)^2]\le E[M^*_v(0)^2]+4|v|^2\|x\|\int_0^te^{-2(\lambda^*-1)u}\,du
\end{equation}
is bounded, and $(M^*_v(t))_{t\ge0}$ thus convergent almost surely and in $L^2$.

We show next that $W\neq0$ with positive probability. Note first that the quadratic variation of $M^*_v(t)$, by definition, is non-decreasing as a function of $t$. Consequently, by~\eqref{eq:quadid}, $E[M^*_v(t)^2]$ is non-decreasing and approaches $E[W^2]$ as $t\to\infty$. Hence, it will suffice to show that $E[M^*_v(1)^2]>0$.

Either $M^*_v(0)^2>0$, in which case $E[M^*_v(1)^2]>0$ holds trivially. Or, as long as there is at least one particle present at time zero, there is positive probability of a (series of nucleations leading to a) nucleation at some node $k$ at which $v$ is nonzero, before $t=1$. By~\eqref{eq:Delta}, this has a positive contribution to the quadratic variation, which by~\eqref{eq:quadid} gives $E[M^*_v(1)]>0$. Consequently, $P(W\neq0)>0$.
\end{proof}

We remark that, for any $x\in\Z^N$, $a,b\in\R$ and vectors $u,v$ in the eigenspace of $\lambda^*$, the following linearity property of the limiting variable is immediate from definition:
\begin{equation}\label{eq:linearity}
W(au+bv,x)=aW(u,x)+bW(v,x).
\end{equation}
As a consequence, since the eigenspace of $\lambda^*$ is spanned by the vectors $v^{(1)}$ and $v^{(2)}$, the limiting function $W(\,\cdot\,,x)$ is determined by the values $W(v^{(1)},x)$ and $W(v^{(2)},x)$.

Above we have supposed that $v$ is in the eigenspace of $\lambda^*$ when considering the process $(M^*_v(t))_{t\ge0}$. We shall henceforth relax that assumption. Of course, the process remains well-defined for arbitrary vectors $v\in\R^N$, although it is no longer a martingale.

\begin{lemma}\label{lemma:mg_vanish}
Suppose that $N\ge4$. For every eigenvalue $\lambda\neq\lambda^*$ of the reinforcement matrix, any vector $v$ in the eigenspace of $\lambda$, and any initial configuration $x\in\Z^N$, the process $(M^*_v(t))_{t\ge0}$ vanishes almost surely as $t\to\infty$.
\end{lemma}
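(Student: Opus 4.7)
The starting point is the identity
$$
M^*_v(t) = e^{(\lambda - \lambda^*)t}\, M_v(t),
$$
immediate from the definitions, where $M_v(t) = e^{-\lambda t} v Z(t)$ is a martingale by Lemma~\ref{lemma:mg}. Since $A^*$ is real symmetric its eigenvalues are real, and $\lambda^*$ being the largest together with $\lambda \neq \lambda^*$ forces $\lambda^* - \lambda > 0$, so the deterministic prefactor decays exponentially. The task thus reduces to showing that $|M_v(t)|$ cannot grow faster than $e^{(\lambda^* - \lambda)t}$ almost surely.

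The main analytic input is Lemma~\ref{lemma:L2bound}. The integral $\int_0^t e^{-2(\lambda-1)u}\, du$ is bounded for $\lambda > 1$, linear in $t$ for $\lambda = 1$, and of order $e^{2(1-\lambda)t}$ for $\lambda < 1$. Setting $\beta := \max(0, 1-\lambda)$, this yields a clean bound of the form $E[M_v(t)^2] \le C\, e^{2\beta t}$ for all $t \ge 0$ (with any subexponential factor from the $\lambda = 1$ case absorbed by slightly enlarging $\beta$).

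With this bound in hand, I would apply Doob's $L^2$ maximal inequality to the submartingale $|M_v|$ on each window $[0, n+1]$: for any $\epsilon > 0$,
$$
P\Big(\sup_{s \in [0, n+1]} |M_v(s)| > e^{(\beta+\epsilon)(n+1)}\Big) \le \frac{E[M_v(n+1)^2]}{e^{2(\beta+\epsilon)(n+1)}} \le C\, e^{-2\epsilon(n+1)}.
$$
These probabilities are summable in $n$, so Borel--Cantelli yields $|M_v(t)| \le e^{(\beta+\epsilon)(t+1)}$ for all sufficiently large $t$, almost surely. Substituting back into the key identity,
$$
|M^*_v(t)| \le e^{(\lambda - \lambda^* + \beta + \epsilon)(t+1)}
$$
for large $t$, and it remains to pick $\epsilon$ so that the exponent is negative. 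When $\lambda \ge 1$ we have $\beta = 0$, and any $\epsilon < \lambda^* - \lambda$ works. When $\lambda < 1$ we have $\beta = 1 - \lambda$, and the requirement simplifies to $\epsilon < \lambda^* - 1$. The hypothesis $N \ge 4$ enters precisely at this point: it guarantees $\lambda^* = 2\cos(\pi/N) > 1$, so a feasible $\epsilon$ exists.

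No serious obstacle is anticipated. The only real care is in tracking the three regimes $\lambda > 1$, $\lambda = 1$, $\lambda < 1$ when estimating the second moment, and in observing that $N \ge 4$ is exactly the condition required in the last regime. As a minor shortcut, in the regime $\lambda > 1$ the martingale $M_v$ is $L^2$-bounded, hence converges a.s.\ to a finite limit by the martingale convergence theorem, and multiplication by the vanishing factor $e^{(\lambda - \lambda^*)t}$ gives the conclusion directly; but the Doob--Borel--Cantelli argument above has the advantage of handling all cases uniformly.
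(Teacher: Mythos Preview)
Your proof is correct and uses the same essential ingredients as the paper --- Lemma~\ref{lemma:L2bound}, Doob's maximal inequality, and Borel--Cantelli --- but organizes them more economically. The paper proceeds in two separate stages: it first establishes $M^*_v(\delta n)\to 0$ along an arithmetic progression by summing second moments, and then separately controls the oscillations of $M_v$ on each interval $[\delta n,\delta(n+1)]$ via the Doob--Kolmogorov inequality and Borel--Cantelli, finally combining the two on events $A_n\cap B_n$. Your argument bypasses the subsequence step entirely by bounding the almost-sure growth rate of $|M_v(t)|$ directly, which is cleaner and avoids the explicit $\delta$-discretization and the somewhat fiddly estimate at the end of the paper's proof. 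One minor remark: your displayed bound $|M^*_v(t)|\le e^{(\lambda-\lambda^*+\beta+\epsilon)(t+1)}$ is not literally what falls out, since $\lambda-\lambda^*<0$ means $e^{(\lambda-\lambda^*)t}$ increases when $t$ is replaced by $t+1$; the honest bound is $e^{(\lambda-\lambda^*+\beta+\epsilon)t}\cdot e^{\beta+\epsilon}$, which carries the same negative exponent in $t$ and leaves the conclusion unaffected.
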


\begin{proof}
We shall first prove that for every $\delta>0$ we have, almost surely,
\begin{equation}\label{eq:subsequence}
M^*_v(\delta n)\to 0\quad\text{as }n\to\infty.
\end{equation}
Indeed, since $M^*_v(t)=e^{-(\lambda^*-\lambda)}M_v(t)$, and $v$ is a vector in the eigenspace of $\lambda$, an application of Lemma~\ref{lemma:L2bound} gives that
\begin{equation}\label{eq:expbound}
E[M^*_v(t)^2]
\le e^{-2(\lambda^*-\lambda)t}E[M_v(0)^2]+\left\{
\begin{aligned}
&4|v|^2\|x\|te^{-2(\lambda^*-\lambda)t} &&\text{if }\lambda\ge1,\\
&4|v|^2\|x\|\frac{1}{1-\lambda}e^{-2(\lambda^*-1)t} &&\text{if }\lambda<1.
\end{aligned}
\right.
\end{equation}
Hence, for $\lambda<\lambda^*$, $E[M^*_v(t)^2]$ decays exponentially fast in $t$. It follows from~\eqref{eq:expbound} that
$$
E\bigg[\sum_{n\ge1}M^*_v(\delta n)^2\bigg]=\sum_{n\ge1}E[M^*_v(\delta n)^2]<\infty,
$$
so, almost surely, the series $\sum_{n\ge1}M^*_v(\delta n)^2$ converges, and~\eqref{eq:subsequence} follows.

We next show that for only finitely many $n$ the sequence $(M^*_v(t))_{t\ge0}$ may deviate far during the interval $[\delta n,\delta(n+1)]$. Let
\begin{align*}
A_n&:=\big\{M^*_v(\delta n)\in(-\delta,\delta)\big\},\\
B_n&:=\big\{
|M_v(t)-M_v(\delta n)|<\delta e^{(\lambda^*-\lambda)\delta n}\text{ for all }t\in[\delta n,\delta(n+1)]\big\}.
\end{align*}
By~\eqref{eq:subsequence} the events $A_n$ occur for all but finitely many $n$ almost surely. Using the Doob-Kolmogorov maximal inequality and Lemma~\ref{lemma:L2bound} we obtain
\begin{align*}
P(B_n^c)&\le\frac{1}{\delta^2}e^{-2(\lambda^*-\lambda)\delta n}E\big[(M_v(\delta(n+1))-M_v(\delta n))^2\big]\\
&\le\frac{1}{\delta^2}4|v|^2\|x\|e^{-2(\lambda^*-\lambda)\delta n}\int_{\delta n}^{\delta(n+1)}e^{-2(\lambda-1)u}\,du\\
&\le\frac{1}{\delta^2}4|v|^2\|x\|e^{-2(\lambda^*-1)\delta n+2\delta}.
\end{align*}
For $N\ge4$ we have $\lambda^*>1$, so the probabilities $P(B_n^c)$ are summable. Borel-Cantelli gives that $B_n$ will occur for all but finitely many $n$. Finally, on the event $A_n\cap B_n$ we have for $t\in[\delta n,\delta(n+1)]$ that
\begin{align*}
\big|M^*_v(t)-M^*_v(\delta n)\big|&\le e^{-(\lambda^*-\lambda)t}\big|M_v(t)-M_v(\delta n)\big|+\big(e^{-(\lambda^*-\lambda)\delta n}-e^{-(\lambda^*-\lambda)t}\big)|M_v(\delta n)|\\
&\le\delta e^{-(\lambda^*-\lambda)(t-\delta n)}+\delta\big(1-e^{-(\lambda^*-\lambda)(t-\delta n)}\big)=\delta,
\end{align*}
and so that $M^*_v(t)\le 2\delta$ for $t\in[\delta n,\delta(n+1)]$. Since $\delta>0$ was arbitrary, we conclude that $M^*_v(t)\to0$ almost surely as $n\to\infty$.
\end{proof}

As a consequence of the above analysis, we may conclude that $(Z(t))_{t\ge0}$, once properly rescaled, converges to a nontrivial limit as a vector in $\R^N$. Moreover, the limit is a linear combination of the vectors $v^{(1)}$ and $v^{(2)}$ from~\eqref{eq:first_vectors} and~\eqref{eq:main_vectors}, and is thus an element of the eigenspace of the largest eigenvector $\lambda^*$.

\begin{prop}\label{prop:limit}
For any initial configuration $x\in\Z^N$ we have, almost surely, that
$$
\lim_{t\to\infty}e^{-\lambda^* t}Z(t)=\frac{W(v^{(1)},x)}{|v^{(1)}|^2}v^{(1)}+\frac{W(v^{(2)},x)}{|v^{(2)}|^2}v^{(2)}.
$$
\end{prop}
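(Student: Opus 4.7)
The plan is to decompose $Z(t)$ in an orthogonal eigenbasis of $A^*$ and identify the surviving components after rescaling by $e^{-\lambda^* t}$.

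First, I would invoke Proposition~\ref{prop:largest_eigenv} to obtain an orthogonal basis $\mathcal{B}$ of $\R^N$ consisting of the real and imaginary parts of the vectors $u^{(k)}$ for $k \le \lfloor (N+1)/2 \rfloor$, each of length $\sqrt{N/2}$. This basis includes $v^{(1)} = \textup{Re}\,u^{(1)}$ and $v^{(2)} = \textup{Im}\,u^{(1)}$, which span the eigenspace of $\lambda^*$, and the remaining basis vectors all lie in eigenspaces corresponding to eigenvalues $\lambda_k = 2\cos(\pi(2k-1)/N)$ with $k \ge 2$ (and their reflections), all strictly less than $\lambda^*$.

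Next, I would write the basis expansion
\begin{equation*}
Z(t) = \sum_{v \in \mathcal{B}} \frac{v \cdot Z(t)}{|v|^2}\, v,
\end{equation*}
which holds pointwise in $t$ since $\mathcal{B}$ is an orthogonal basis of $\R^N$. Multiplying through by $e^{-\lambda^* t}$ and using the definition $M^*_v(t) = e^{-\lambda^* t} v \cdot Z(t)$ gives
\begin{equation*}
e^{-\lambda^* t} Z(t) = \sum_{v \in \mathcal{B}} \frac{M^*_v(t)}{|v|^2}\, v.
\end{equation*}
Because $|\mathcal{B}| = N$ is finite, the almost sure limit of the left-hand side equals the sum of the almost sure limits of the individual terms on the right, provided each exists almost surely.

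Then I would apply the two convergence results separately. For $v \in \{v^{(1)}, v^{(2)}\}$, Lemma~\ref{lemma:L2mg} yields that $M^*_v(t) \to W(v,x)$ almost surely. For every other $v \in \mathcal{B}$, the vector $v$ lies in the eigenspace of some eigenvalue $\lambda \neq \lambda^*$, so Lemma~\ref{lemma:mg_vanish} gives $M^*_v(t) \to 0$ almost surely. Taking the intersection of these finitely many probability-one events, all limits hold simultaneously almost surely, and only the two $\lambda^*$-terms survive, yielding the claimed identity.

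There is no substantive obstacle in this argument, since the eigenstructure analysis of Section~\ref{sec:eigenstructure} and the two martingale lemmas of Section~\ref{sec:martingale} do all the work. The only point requiring a mild comment is that the basis expansion is valid because $\R^N$ is finite-dimensional and $\mathcal{B}$ is genuinely orthogonal (which is part of Proposition~\ref{prop:largest_eigenv}), so the decomposition in terms of coefficients $(v \cdot Z(t))/|v|^2$ is unambiguous and the finite sum of almost sure limits behaves as expected.
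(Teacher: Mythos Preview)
Your proof is correct and follows essentially the same approach as the paper: both expand in the orthogonal eigenbasis from Proposition~\ref{prop:largest_eigenv} and apply Lemmas~\ref{lemma:L2mg} and~\ref{lemma:mg_vanish} to isolate the $\lambda^*$-components. The only cosmetic difference is that the paper expands each coordinate vector $e_j$ in the basis and computes $\lim_t M^*_{e_j}(t)$ coordinate-by-coordinate, whereas you expand $Z(t)$ itself directly; the two are equivalent and equally valid.
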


\begin{proof}
Recall, from Proposition~\ref{prop:largest_eigenv}, that the real and imaginary parts of the complex-valued eigenvectors $u^{(k)}$ form an orthogonal basis in $\R^N$. We shall below denote these base vectors by $v^{(1)},v^{(2)},\ldots,v^{(N)}$, where $v^{(1)}$ and $v^{(2)}$ are the real and imaginary parts of $u^{(1)}$, and the remaining vectors are ordered arbitrarily. Every vector $v\in\R^N$ can thus be uniquely expressed as $v=a_1v^{(1)}+\ldots+a_Nv^{(N)}$ for some $a_1,\ldots,a_N$ in $\mathbb{R}$. By Lemmas~\ref{lemma:L2mg} and~\ref{lemma:mg_vanish} we have, almost surely, that
\begin{equation}\label{eq:gen_lim}
\lim_{t\to\infty}M^*_v(t)=\lim_{t\to\infty}\big[a_1M^*_{v^{(1)}}(t)+\ldots+a_NM^*_{v^{(N)}}(t)\big]=a_1W(v^{(1)},x)+a_2W(v^{(2)},x).
\end{equation}
In particular, this holds for the coordinate vectors $e_1,e_2,\ldots,e_N$ of $\mathbb{R}^N$. Since $(v^{(k)})_{k=1}^N$ forms an orthogonal basis, we have for $j=1,2,\ldots,N$ that
$$
e_j=\frac{(v^{(1)})_j}{|v^{(1)}|^2}v^{(1)}+\ldots+\frac{(v^{(N)})_j}{|v^{(N)}|^2}v^{(N)},
$$
where $(v^{(k)})_j$ is the $j$th coordinate of $v^{(k)}$. By~\eqref{eq:gen_lim}, we obtain, almost surely, that
$$
\lim_{t\to\infty}M^*_{e_j}(t)=\frac{(v^{(1)})_j}{|v^{(1)}|^2}W(v^{(1)},x)+\frac{(v^{(2)})_j}{|v^{(2)}|^2}W(v^{(2)},x),
$$
as required.
\end{proof}

\section{The martingale limit}\label{sec:limit}

As a consequence of the martingale analysis of the previous section, $(e^{-\lambda^*t}Z(t))_{t\ge0}$ converges almost surely to an element in the eigenspace of $\lambda^*$. The goal of this section is to describe the limiting element more closely, in order to complete the proof of Theorem~\ref{thm:signed}.
A key step will be a coupling construction, that will allow us to compare the evolution of the signed competition process with the evolution of the same in the case that the first nucleation had never taken place. Examining the difference between the two will allow us to draw conclusions regarding the limiting variables $W=W(v,x)$. A similar coupling construction was previously introduced in~\cite{ahlgrijanmor19}, and put to use for a similar purpose.

\subsection{Coupling construction}\label{sec:coupling}

Our first goal will be to construct a larger system of particles, from which we can read out the evolution of different versions of the auto-annihilative process. The larger system will consist of red, blue and purple particles, each of which can be either positive or negative. Each colour, when considered on its own, will evolve according to the dynamics of the signed competition process $(Z(t))_{t\ge0}$. In particular, balls of the same colour and opposing signs (when present at the same site) annihilate on a one-for-one basis. Balls of different colour (when present at the same site) interact according to the following rules.
\begin{itemize}
\item Red and blue of the same sign do not interact; red and blue of opposing signs result in a purple particle with the same sign as the red particle.
\item Red and purple of the same sign do not interact; red and purple of opposing signs result in a blue particle with the same sign as the red particle.
\item Blue and purple of opposing signs do not interact; blue and purple of the same sign result in a red particle of the same sign.
\end{itemize}

The rationale behind the rules of interaction between particles of different colour is that a purple ball should be thought of as temporary bond between a red and a blue particle of opposing signs. The bond is broken when the pair interacts with another red or blue particle whose sign opposes the sign of the particle of the same colour present in the bond. When the bond is broken the same colour particles (of opposing sign) annihilate, and the particle remaining is set free.

Every particle in the system is given a Poisson clock when born. For definiteness we may assume that the purple particle that arises from the merger of a red and a blue particle adopts the clock of its red constituent, and when the bond is eventually broken, the particle set free returns to follow its original clock. It is straightforward to verify that the above larger system of particles is again dominated by a Bienaym\'e-Galton-Watson process, and is thus a pure-jump Markov process without explosions.

We next describe the initial configuration of our larger system of coloured particles. Let $x\in\Z^N\setminus\{0\}$ be any nonzero configuration, and let $X$ and $T$ be auxiliary random variables, independent of each other and everything else, such that $X$ takes values in $\{1,2,\ldots,N\}$ with
$$
P(X=k):=\frac{|x_k|}{\|x\|}\quad\text{for }k=1,2,\ldots,N,
$$
and $T$ is exponentially distributed with intensity $\|x\|$.
Add signed red balls to the nodes of $C_N$ according to $x$, and sample $X$. On the event that $\{X=k\}$, send one blue ball along each edge adjacent to node $k$ with sign corresponding to the sign of $x_k$. (If one of the blue balls traverses the signed edge, then its sign is changed as usual.) The initial configuration of blue balls, on the event $\{X=k\}$ thus equals $A^*_k\cdot\sign(x_k)$, where $A^*_k$ is the $k$-th column of the reinforcement matrix $A^*$. The blue balls instantaneously interact with any red balls of opposing sign, if any. The resulting configuration will be taken as the initial configuration of the larger system of particles.

Our aim is to compare the evolution of the auto-annihilative process $(Z(t))_{t\ge0}$, started from some configuration $x\in\Z^N\setminus\{0\}$, with the evolution of itself in the case that the first nucleation had never taken place. The former corresponds to the evolution of the red particles initially placed on the vertices of $C_N$. Note that the random configuration of blue particles initially added to the configuration of red particles is equivalent to the result of the first nucleation of the auto-annihilative process started from $x$. Since we want to track the evolution of the process both in the case that this nucleation does and does not take place, we want to be able to distinguish between these particles being present and not. This is the purpose of the purple particles, which should thus be thought of as present (red) particles in absence of the first nucleation, and as absent (annihilated) particles in the presence of the first nucleation.

To make the comparison between the two versions of the signed competition process precise, let $R(t)$, $B(t)$ and $P(t)$ be vectors encoding the number of red, blue and purple particles present at the nodes of $C_N$ in the system at time $t\ge0$. For $t\ge0$, let
$$
Z'(t):=R(t)+P(t),
$$
and
\begin{equation*}
Z''(t):=\left\{
\begin{aligned}
& x & \text{if }t<T,\\
& R(t-T)+B(t-T) & \text{if }t\ge T.
\end{aligned}
\right.
\end{equation*}

Note that $T$ is equal in distribution to the time of the first nucleation in the signed competition process with initial configuration $x$, and the random configuration of blue particles initially generated is equal in distribution to the particles generated in the first nucleation. Consequently, the vector $Z'(t)$ can be interpreted as the configuration at time $t$ in case the first nucleation is ignored and time reset at this point; delaying the start of $(Z'(t))_{t\ge0}$ for $T$ units of time results in a process where the first nucleation is suppressed. The vector $Z''(t)$ can similarly be interpreted as the configuration at time $t$ if the first nucleation is allowed to take place (at time $T$).

In addition, for $t\ge0$, we set 
$$
D(t):=B(t)-P(t).
$$
Note that $D(t)$ denotes the difference between the two processes $Z'$ and $Z''$ in the sense that
\begin{equation}\label{eq:couplingdiff}
Z''(t)=x\ind_{\{t<T\}}+\big[Z'(t-T)+D(t-T)\big]\ind_{\{t\ge T\}}.
\end{equation}

Our next lemma relates the larger system of particles to the original process.

\begin{lemma}\label{lemma:delayedcoupling}
Let $x\in\Z^N\setminus\{0\}$ be any nonzero configuration, and consider the above system of coloured competition. Then, the processes $(Z'(t))_{t\ge0}$ and $(Z''(t))_{t\ge0}$ are equal in distribution to the signed competition process $(Z(t))_{t\ge0}$ with initial configuration $x$. Conditional on the event $\{X=k\}$, the process $(D(t))_{t\ge0}$ is equal in distribution to the signed competition process with initial configuration $A^*_k\cdot\sign(x_k)$.
\end{lemma}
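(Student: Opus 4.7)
The plan is to verify directly that the three processes have the claimed laws by showing that the coloured system, when projected under each of the maps $(R,B,P)\mapsto R+P$, $(R,B,P)\mapsto R+B$ and $(R,B,P)\mapsto B-P$, becomes a Markov process whose transition rates and initial data agree with those of a signed competition process started from the stated configuration. The cornerstone observation, which I would establish first, is that all three inter-colour interaction rules --- red and blue of opposing signs $\to$ purple with the red's sign; red and purple of opposing signs $\to$ blue with the red's sign; blue and purple of the same sign $\to$ red of that sign --- together with same-colour annihilations, preserve each of the three quantities $R+P$, $R+B$ and $B-P$. A nine-case check handles this, and the upshot is that the cascaded relaxation of any non-stable configuration to a stable one is a well-defined map on each projected state space.

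Next, I would check the initial conditions vertex by vertex. At any vertex, the instantaneous interaction between the initial red configuration $x$ and the blue balls spawned near $X$ either leaves a same-sign pair untouched or converts an opposing pair into a purple with the red's sign; in both cases one obtains $R(0)+P(0)=x$, as well as $R(0)+B(0)=x+A^*_X\cdot\sign(x_X)$ and $B(0)-P(0)=A^*_X\cdot\sign(x_X)$ at every vertex. Since $T$ is exponentially distributed with rate $\|x\|$ and $Z''(t)=x$ for $t<T$, the dormant phase of $Z''$ also agrees with that of a signed competition process started from $x$.

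The main step --- and the principal source of technical work --- is to verify that the jump dynamics match. In any stable configuration at a vertex $k$, the interaction rules prevent red and purple of opposing signs from coexisting (analogously for red and blue, and for same-signed blue and purple), hence $|Z'_k|=|R_k|+|P_k|$ with $\sign(Z'_k)=\sign(R_k)=\sign(P_k)$ whenever either is present; the analogues hold for $Z''$ and for $D$, provided one assigns to each purple ball of sign $p$ the $D$-sign $-p$. Because every ball carries an independent rate-$1$ Poisson clock, the total rate at which $Z'$ fires at $k$ is exactly $|Z'_k|$, matching the auto-annihilative generator; similarly for $Z''$ and $D$. When a red or purple ball of sign $s=\sign(Z'_k)$ fires at $k$ and sends a copy (with sign adjustment on the sign-reversing edge) to a neighbour $m$, I would trace the cascade of interactions triggered at $m$ through a finite case analysis over the stable configurations there, and check that the net change in $R+P$ equals $s\cdot A^*_{mk}$; the invariance from the first step makes this outcome path-independent. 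The same argument handles the projections for $Z''$ and $D$. The hardest part will be the bookkeeping for this cascade analysis --- in particular making sure that when several interactions at $m$ are simultaneously available (say, a new red of sign $s$ that could either annihilate an opposite-sign red or combine with an opposite-sign blue into a purple) all paths lead to the same final invariants --- but the three-invariant observation reduces this precisely to routine case enumeration.
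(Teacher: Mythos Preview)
Your plan is correct and, if anything, more systematic than the paper's own argument. The paper handles the three assertions separately and somewhat informally: for $Z'=R+P$ it simply notes that purple particles carry their red clocks and declares the conclusion ``readily seen''; for $Z''$ it invokes the strong Markov property at the first nucleation time; and for $D=B-P$ it appeals to the heuristic that a purple is a blue temporarily paired with a red, so that blue-plus-captive-blue equals ``blue in the absence of red'', together with memorylessness to dismiss the clock swap. Your organising observation---that each of $R+P$, $R+B$, $B-P$ is conserved by every interaction rule (including same-colour annihilation)---subsumes all three at once and converts the verification into a uniform generator computation: for each projection the relevant colours at a vertex are forced by stability to share the projected sign, so the jump rate is the modulus of the projected value and the increment is $A^*_k$ times that sign, while nucleations of the remaining colour leave the projection untouched. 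This is exactly the generator of the signed process.

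Two small points worth recording when you write it out. First, your three invariants are not independent (they satisfy $(R+P)+(B-P)=R+B$), so they do not by themselves pin down $(R,B,P)$; but together with the stability constraints (no opposite-sign red/blue, no opposite-sign red/purple, no same-sign blue/purple) they \emph{do} determine the stable triple uniquely, which is what makes the coloured process itself well-defined and your cascade argument go through without ambiguity. Second, for $Z''$ you should make explicit that $X$ is distributed as the location of the first nucleation (proportional to $|x_k|$), not just that $T$ has the right law---this is built into the construction, but it is what glues the dormant and active phases correctly.
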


\begin{proof}
Note that $Z'(0)=Z''(0)=x$ by definition. Since purple particles follow the clocks of their red components, $(Z'(t))_{t\ge0}$ is readily seen to be equal to a version of the signed competition process with initial configuration $x$. 

Let $(Z(t))_{t\ge0}$ be a version of the signed competition process with initial configuration $x$. The time of the first nucleation is then distributed as $T$, the position where the first nucleation takes place is distributed as $X$, and on the event that the first nucleation takes place at position $k$ the result of the nucleation is addition by the vector $A^*_k\cdot\sign(x_k)$. This shows that $(Z(t))_{t\ge0}$ and $(Z''(t))_{t\ge0}$ are equal in distribution up to and including the time of the first nucleation. Since $(Z''(t))_{t\ge0}$ does not differentiate between red and blue particles, but ignore those that are purple, it follows from the strong Markov property that $(Z(t))_{t\ge0}$ and $(Z''(t))_{t\ge0}$ remain equal in distribution also after the time of the first nucleation. (One may imagine that all clocks of the system are reset at the time of every nucleation, as this does not affect the distribution of the system.)

Finally, since purple particles adopt the sign of their red constituent, we have on the event $\{X=k\}$ that $D(0)=A^*_k\cdot\sign(x_k)$. Moreover, thinking of each purple particle as a blue particle temporarily paired with a red particle of opposing sign, the number of particles that are either blue or (negative) purple correspond to the number of blue particles that would be present in absence of red particles. In addition, however, a paired blue particle temporarily follows the clock of its red partner. That this (possibly temporary) change of clocks does not affect the law of the process follows from elementary properties of the Poisson process, together with the strong Markov property.
\end{proof}

\subsection{Limiting behaviour}

Let $v$ be any nonzero vector in the eigenspace of the largest eigenvalue $\lambda^*$. The distribution of the limiting variable $W(v,x)=\lim_{t\to\infty}M^*_v(t)$ will depend both on $v$ and the initial configuration $x$. Next we make use of the above coupling construction to show that $W=W(v,x)$ is almost surely nonzero.

\begin{lemma}\label{lemma:limprob}
Suppose that $N\ge4$ and let $v$ be any nonzero vector in the eigenspace of $\lambda^*$. There exists $c>0$ such that for every $x\in\Z^N\setminus\{0\}$ we have
$$
P\big(W(v,x)=0\big)<1-c.
$$
\end{lemma}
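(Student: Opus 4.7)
I would argue by contradiction using the coupling of Section~\ref{sec:coupling}. Suppose there exists a sequence $(x_n)\subseteq\Z^N\setminus\{0\}$ along which $P\big(W(v,x_n)=0\big)\to 1$, and for each $n$ instantiate the coupled system from $x_n$. Setting
$$
W'_n:=\lim_{t\to\infty}e^{-\lambda^*t}vZ'(t)\qquad\text{and}\qquad W^D_n:=\lim_{t\to\infty}e^{-\lambda^*t}vD(t),
$$
which exist almost surely by Lemma~\ref{lemma:L2mg} applied to $(Z'(t))$ and $(D(t))$ (whose laws, by Lemma~\ref{lemma:delayedcoupling}, are those of signed competition processes started from nonzero configurations), the identity~\eqref{eq:couplingdiff} yields, after multiplication by $e^{-\lambda^*t}v$ and passage to the limit $t\to\infty$, the almost-sure relation
$$
W_{Z''}\;=\;e^{-\lambda^*T_n}\big[W'_n+W^D_n\big],
$$
with $W_{Z''}\stackrel{d}{=}W(v,x_n)$. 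Since $e^{-\lambda^*T_n}>0$ almost surely, the standing assumption forces $P(W'_n+W^D_n=0)\to 1$.

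Next I would exploit the marginal-distribution identities from Lemma~\ref{lemma:delayedcoupling}. Because $W'_n\stackrel{d}{=}W(v,x_n)$, the assumption gives $W'_n\to 0$ in probability. Combining this with $W'_n+W^D_n\to 0$ in probability (an immediate consequence of the previous display) via a triangle inequality forces $W^D_n\to 0$ in probability as well. It is important to note that no independence between $W'_n$ and $W^D_n$ is required here (none is afforded by the coupling); only marginal control is used.

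The contradiction comes from the following key observation. The configuration $A^*_k\cdot\sign(x_{n,k})$ lies in the \emph{finite} set $\mathcal{Y}:=\{A^*_k\cdot s:k=1,\ldots,N,\ s\in\{\pm1\}\}$ of nonzero vectors. By Lemma~\ref{lemma:delayedcoupling}, conditional on the first-nucleation position $X_n=k$, the variable $W^D_n$ has the law of $W(v,A^*_k\cdot\sign(x_{n,k}))$; thus $W^D_n$ is a mixture (with $n$-dependent weights) of the at most $2N$ fixed distributions $\{W(v,y):y\in\mathcal{Y}\}$. Each such law satisfies $P(W(v,y)\neq 0)>0$ by Lemma~\ref{lemma:L2mg}, and finiteness of $\mathcal{Y}$ allows one to select $\eta,\alpha>0$ with $P(|W(v,y)|>\eta)\geq\alpha$ uniformly over $y\in\mathcal{Y}$. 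Averaging over $X_n$ then gives $P(|W^D_n|>\eta)\geq\alpha$ for every $n$, contradicting $W^D_n\to 0$ in probability.

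I expect the main subtlety to be the justification of the almost-sure identity $W_{Z''}=e^{-\lambda^*T_n}[W'_n+W^D_n]$: this requires the $L^2$-convergence from Lemma~\ref{lemma:L2mg} applied \emph{separately} to $(Z'(t))$ and $(D(t))$, so that both $e^{-\lambda^*t}vZ'(t)$ and $e^{-\lambda^*t}vD(t)$ converge almost surely, and one may pass the limit through the decomposition of $Z''(t)$ supplied by~\eqref{eq:couplingdiff} for $t\geq T_n$.
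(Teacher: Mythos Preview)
Your argument is correct and uses the same coupling ingredients as the paper: the identity $W''=e^{-\lambda^*T}(W'+W^D)$ from~\eqref{eq:couplingdiff}, the marginal identifications of Lemma~\ref{lemma:delayedcoupling}, and the observation that $W^D$ is a mixture over the finite set $\mathcal{Y}=\{A_k^*\cdot s:k\le N,\,s=\pm1\}$, so that $P(|W^D|>\eta)\ge\alpha$ uniformly in $x$. The only difference is the final deduction. The paper argues directly: from $P(W^D\neq0)\ge 2c$ and the implication $\{W^D\neq0\}\subseteq\{W'\neq0\}\cup\{W''\neq0\}$ it obtains $P(W'=0,W''=0)\le 1-2c$, and then the elementary inequality $P(W'=0)\le P(W''\neq0)+P(W'=0,W''=0)$ combined with $W'\stackrel{d}{=}W''\stackrel{d}{=}W(v,x)$ yields $2P(W(v,x)=0)\le 2-2c$. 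Your route by contradiction---passing from $P(W'_n=0)\to1$ and $P(W'_n+W^D_n=0)\to1$ to $W^D_n\to0$ in probability---is the asymptotic version of the same subtraction, and is arguably cleaner to read, at the cost of not exhibiting the constant $c$ explicitly.
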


\begin{proof}
Consider the coupling construction from Section~\ref{sec:coupling}. By Lemma~\ref{lemma:delayedcoupling}, the sequences $(Z'(t))_{t\ge0}$, $(Z''(t))_{t\ge0}$ and $(D(t))_{t\ge0}$ are all versions of the auto-annihilative growth process. Let
$$
M'_v(t):=e^{-\lambda^* t}vZ'(t),\, M''_v(t):=e^{-\lambda^* t}vZ''(t)\text{ and }M^D_v(t):=e^{-\lambda^* t}vD(t).
$$
By Lemma~\ref{lemma:L2mg}, the sequences $(M'_v(t))_{t\ge0}$, $(M''_v(t))_{t\ge0}$ and $(M^D_v(t))_{t\ge0}$ converge almost surely and in $L^2$ to random variables $W'=W'(v,x)$, $W''=W''(v,x)$ and $W^D=W^D(v,x)$, respectively. From equation~\eqref{eq:couplingdiff} we deduce that the limiting variables satisfy the almost sure relation
\begin{equation}\label{eq:limitdiff}
W''=e^{-\lambda^* T}\big(W'+W^D\big).
\end{equation}

Both of the limiting variables $W'$ and $W''$ are, again by Lemma~\ref{lemma:delayedcoupling}, equal in distribution to $W(v,x)$, and, conditioned on the event $\{X=k\}$, the variable $W^D$ is equal in distribution to $W(v,y_k)$ with $y_k=A^*_k\cdot\sign(x_k)$. For each nonzero initial configuration $x$, it follows from Lemma~\ref{lemma:L2mg} that $W(v,x)$ has positive probability of being nonzero. Since $X$ takes on a finite number of values, it follows that there exists a constant $c>0$, not depending on $x$, such that for each $k=1,2,\ldots,N$ we have
$$
P\big(W^D\neq0\big|X=k\big)\ge 2c.
$$
Via the law of total probability we obtain, uniformly over $x\in\Z^N\setminus\{0\}$, that
\begin{equation}\label{eq:WDbound}
P\big(W^D\neq0\big)\ge 2c.
\end{equation}

Note that on the event that $W^D\neq0$, by~\eqref{eq:limitdiff}, we cannot (with positive probability) have both $W'=0$ and $W''=0$. Hence, by~\eqref{eq:WDbound}, we have for all $x\in\Z^N\setminus\{0\}$ that
$$
P\big(W'=0,W''=0\big)\le P\big(W^D=0\big)\le1-2c.
$$
Consequently,
$$
P(W'=0)=P\big(W'=0,W''\neq0\big)+P\big(W'=0,W''=0\big)\le P(W''\neq0)+1-2c,
$$
which yields
$$
P(W'=0)+P(W''=0)\le 2-2c.
$$
Since both the limiting variables $W'$ and $W''$ are equal in distribution to $W(v,x)$, we conclude that $P(W(v,x)=0)\le1-c$ uniformly in $x$, as required.
\end{proof}

Using the Markov character of the process, we deduce next that $W(v,x)$ is almost surely nonzero.

\begin{lemma}\label{lemma:Wnot0wp1}
Suppose that $N\ge4$ and let $v$ be any nonzero vector in the eigenspace of $\lambda^*$. Then, for every $x\in\Z^N\setminus\{0\}$, we have
$$
P\big(W(v,x)=0\big)=0.
$$
\end{lemma}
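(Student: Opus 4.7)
My plan is to combine L\'evy's zero-one law with the uniform probability bound from Lemma~\ref{lemma:limprob} to identify the event $\{W(v,x)=0\}$ (up to a null set) with the extinction event of $(Z(t))_{t\geq 0}$, and then to rule out extinction directly for $N\geq 4$ by a parity and combinatorial argument.

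First, I would extend the function $p(y):=P(W(v,y)=0)$ to all of $\Z^N$ by setting $p(0):=1$, consistent with the fact that $W(v,0)=0$ deterministically (absent particles the configuration stays zero). The Markov property of $(Z(t))_{t\geq 0}$ yields, for every $t\geq 0$,
$$
P\bigl(W(v,x)=0\bigm|\mathcal{F}_t\bigr)=p(Z(t))\quad\text{almost surely,}
$$
since, conditionally on $\mathcal{F}_t$, the tail limit $W(v,x)=e^{-\lambda^*t}\lim_{u\to\infty}e^{-\lambda^*u}vZ(t+u)$ equals $e^{-\lambda^*t}$ times a copy of $W(v,Z(t))$. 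L\'evy's zero-one law, applied to the $\mathcal{F}_\infty$-measurable event $\{W=0\}$, then gives $p(Z(t))\to \ind_{\{W=0\}}$ almost surely as $t\to\infty$. By Lemma~\ref{lemma:limprob}, the range of $p$ is contained in $[0,1-c]\cup\{1\}$, with a gap around $1$. Hence on the event $\{W=0\}$ the process $p(Z(t))$ must actually equal $1$ for all sufficiently large $t$, which forces $Z(t)=0$ for all large $t$. Since the zero configuration is absorbing, this is equivalent to $\tau:=\inf\{t\geq 0:Z(t)=0\}$ being finite, and we obtain $\{W=0\}\subseteq\{\tau<\infty\}$ almost surely.

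It remains to show that $P(\tau<\infty)=0$ whenever $x\neq 0$ and $N\geq 4$. At each nucleation event the coordinates $Z_{k-1}$ and $Z_{k+1}$ at the two (distinct, since $N\geq 3$) neighbours of the nucleating vertex $k$ each change by $\pm 1$ in absolute value, so $\|Z\|_1$ jumps by an element of $\{-2,0,+2\}$ and hence preserves parity. If $\|x\|_1$ is odd, then $\|Z(t)\|_1\geq 1$ at all times. If $\|x\|_1$ is even, extinction would require $\|Z\|_1$ to pass through the value $2$, so it suffices to show that $\|Z\|_1$ cannot drop from $2$ to $0$ in a single jump. A configuration with $\|Z\|_1=2$ consists either of a single particle of magnitude two, whose two neighbours are empty and so no annihilation occurs upon nucleation, or of two unit particles at distinct positions $i$ and $j$, in which case a nucleation at $i$ triggers an annihilation at neighbour $i\pm 1$ only if $j=i\pm 1$ with opposite sign. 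Since $i-1$ and $i+1$ are distinct positions for $N\geq 3$, at most one of the two neighbours can annihilate. In either case $\|Z\|_1$ cannot drop to $0$ in one step, so $\|Z(t)\|_1\geq 2$ for all $t$, and $\tau=\infty$ almost surely.

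The most delicate point I anticipate is obtaining the almost sure identification $\{W=0\}=\{\tau<\infty\}$ by carefully combining L\'evy's zero-one law with the Markov property (in particular requiring that $(\mathcal{F}_t)$ be right-continuous, which is standard). The combinatorial step that rules out extinction is then elementary.
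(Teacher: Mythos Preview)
Your proof is correct and follows the same approach as the paper: combine the Markov property with Lemma~\ref{lemma:limprob} and L\'evy's zero-one law to force $\ind_{\{W=0\}}\le 1-c$, hence $=0$. You are in fact more careful than the paper on one point: the paper applies the bound $P(W(v,Z(t))=0\mid Z(t))\le 1-c$ without remarking that Lemma~\ref{lemma:limprob} only covers nonzero configurations, whereas you explicitly reduce $\{W=0\}$ to the extinction event and then rule out extinction. Your parity-and-cases argument for non-extinction is valid but heavier than needed: since a nucleation at vertex $k$ alters only the two neighbours and leaves $Z_k$ itself unchanged, any single jump from a nonzero configuration lands at a nonzero configuration, so $Z(t)\neq 0$ for all $t$ whenever $x\neq 0$.
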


\begin{proof}
Recall that $(\mathcal{F}_t)_{t\ge0}$ denotes the natural filtration where $\mathcal{F}_t$ is the sigma algebra generated by $\{Z(s):s\le t\}$. By the L\'evy 0-1-law we have almost surely that
$$
\lim_{t\to\infty}P\big({W(v,x)}=0\big|\mathcal{F}_t\big)=\ind_{\{W(v,x)=0\}}.
$$
Moreover, due to the Markov character of the process, Lemma~\ref{lemma:limprob} gives that with probability one
$$
P\big(W(v,x)=0\big|\mathcal{F}_t\big)=P\big(W(v,Z(t))\big|Z(t)\big)\le1-c.
$$
It follows that $\ind_{\{W(v,x)=0\}}=0$, and thus that $W(v,x)\neq0$, almost surely.
\end{proof}

%
%

%

In addition, we complement the preceding lemma with the following concentration bound.

\begin{lemma}\label{lemma:concentration}
Suppose that $N\ge4$. For every initial configuration $x\in\Z^N$ and nonzero vector $v$ in the eigenspace of $\lambda^*$, we have
$$
P\big(\big|W(v,x)-v\cdot x\big|\ge\alpha\big)\le\frac{5|v|^2\|x\|}{\alpha^2}.
$$
\end{lemma}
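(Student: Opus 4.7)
The plan is to recognize this as a straightforward Chebyshev bound applied to the $L^2$-bounded martingale $(M^*_v(t))_{t\ge 0}$, using the quadratic variation estimate already established in Lemma~\ref{lemma:L2bound}. Note that $M^*_v(0) = v \cdot x$, and by Lemma~\ref{lemma:L2mg} the martingale converges to $W(v,x)$ in $L^2$ (hence also in $L^1$), so $E[W(v,x)] = v \cdot x$. Consequently $W(v,x) - v\cdot x$ is a centered random variable whose second moment equals
$$
E\big[(W(v,x) - v\cdot x)^2\big] = E[W(v,x)^2] - (v\cdot x)^2 = \lim_{t\to\infty} E[M^*_v(t)^2] - E[M^*_v(0)^2].
$$

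Next I would invoke Lemma~\ref{lemma:L2bound} with $\lambda = \lambda^*$ and $s=0$, and let $t\to\infty$. Since $\lambda^* > 1$ for $N\ge 4$, the integral $\int_0^\infty e^{-2(\lambda^*-1)u}\,du$ is finite and equal to $\frac{1}{2(\lambda^*-1)}$, giving
$$
E\big[(W(v,x) - v\cdot x)^2\big] \le \frac{2|v|^2\|x\|}{\lambda^* - 1}.
$$
Chebyshev's inequality then yields
$$
P\big(|W(v,x) - v\cdot x| \ge \alpha\big) \le \frac{2|v|^2\|x\|}{(\lambda^*-1)\alpha^2}.
$$

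To close the gap to the claimed constant $5$, I would use monotonicity of $\cos$ on $[0,\pi/2]$ together with $N\ge 4$ to obtain $\lambda^* = 2\cos(\pi/N) \ge 2\cos(\pi/4) = \sqrt{2}$, so that
$$
\frac{2}{\lambda^* - 1} \le \frac{2}{\sqrt{2} - 1} = 2(\sqrt{2}+1) = 2 + 2\sqrt{2} < 5.
$$
This gives the bound $5|v|^2\|x\|/\alpha^2$ uniformly in $N \ge 4$.

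I do not anticipate any genuine obstacle here: the only mild subtleties are (i) making sure the centering is valid, which requires $L^1$ convergence and hence the $L^2$ statement of Lemma~\ref{lemma:L2mg}, and (ii) checking the numerical bound $2/(\lambda^*-1) \le 5$ at the boundary case $N=4$, which is what forces the precise constant $5$ rather than a smaller one.
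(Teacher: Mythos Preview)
Your proposal is correct and follows essentially the same approach as the paper: compute $E[W]=v\cdot x$ via $L^2$ martingale convergence, bound $\mathrm{Var}(W)$ using Lemma~\ref{lemma:L2bound} with $s=0$ and $t\to\infty$, and apply Chebyshev. In fact you are more explicit than the paper, which simply writes ``computing the integral and using Chebyshev's inequality'' without displaying the numerical check $2/(\lambda^*-1)\le 2/(\sqrt{2}-1)<5$ at $N=4$.
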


\begin{proof}
By Lemma~\ref{lemma:L2mg} the process $(M^*_v(t))_{t\ge0}$ converges almost surely and in $L^2$. Consequently, the limit $W=W(v,x)$ has finite mean and variance, and satisfies
$$
E[W]=\lim_{t\to\infty}E[M^*_v(t)]=E[M^*_v(0)]=v\cdot x
$$
and, together with Lemma~\ref{lemma:L2bound},
$$
E[W^2]=\lim_{t\to\infty}E[M^*_v(t)^2]\le E[M^*_v(0)^2]+4|v|^2\|x\|\int_0^\infty e^{-2(\lambda^*-1)u}\,du
$$
Since $E[M^*_v(0)^2]=(v\cdot x)^2=E[W]^2$, computing the integral and using Chebyshev's inequality gives that
$$
P\big(|W-v\cdot x|\ge\alpha\big)\le\frac{Var(W)}{\alpha^2}\le\frac{5|v|^2\|x\|}{\alpha^2},
$$
as required.
\end{proof}

\subsection{Proof of Theorem~\ref{thm:signed}}

Suppose that $N\ge4$ and that $x\in\Z^N\setminus\{0\}$. By Proposition~\ref{prop:limit} we have, almost surely,
\begin{equation}\label{eq:final_limit}
\lim_{t\to\infty}e^{-\lambda^*t}Z(t)=\frac{W(v^{(1)},x)}{|v^{(1)}|^2}v^{(1)}+\frac{W(v^{(2)},x)}{|v^{(2)}|^2}v^{(2)}.
\end{equation}
By Lemma~\ref{lemma:Wnot0wp1} we have $W(v^{(1)},x)\neq0$ and $W(v^{(2)},x)\neq0$ with probability one. Consequently, by the harmonic addition formula, the limit in~\eqref{eq:final_limit} can be written on the form $(R\cos((j-1)\pi/N+S))_{j=1,\ldots,N}$ for random variables $R=R(x)$ and $S=S(x)$ satisfying
$$
R=\sign\big(W(v^{(1)},x)\big)\frac{\sqrt{W(v^{(1)},x)^2+W(v^{(2)},x)^2}}{|v^{(1)}|^2}\quad\text{and}\quad S=\arctan\bigg(-\frac{W(v^{(2)},x)}{W(v^{(1)},x)}\bigg).
$$
(Recall that $|v^{(1)}|=|v^{(2)}|=\sqrt{N/2}$; see Proposition~\ref{prop:largest_eigenv}.)

\begin{claim}\label{claim:continuity}
For every $x\in\Z^N\setminus\{0\}$ (the distribution functions of) the random variables $R(x)$ and $S(x)$ are continuous.
\end{claim}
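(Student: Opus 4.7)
The plan is to handle $S(x)$ and $R(x)$ separately. Continuity of $S(x)$ will follow essentially immediately from the linearity property~\eqref{eq:linearity} and Lemma~\ref{lemma:Wnot0wp1}, whereas continuity of $R(x)$ requires the coupling construction of Section~\ref{sec:coupling}, whose point is to exploit the continuity of the auxiliary exponential time $T$.

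For $S(x)$: By Lemma~\ref{lemma:Wnot0wp1}, $W(v^{(1)}, x) \neq 0$ almost surely, so almost surely $S(x) \in (-\pi/2, \pi/2)$; in particular $P(S(x) = -\pi/2) = 0$. For any fixed $s \in (-\pi/2, \pi/2)$, the event $\{S(x) = s\}$ coincides, up to a null set, with the event $\{W(v^{(2)}, x) + \tan(s)\,W(v^{(1)}, x) = 0\}$, which by~\eqref{eq:linearity} equals $\{W(v^{(2)} + \tan(s)\,v^{(1)}, x) = 0\}$. Since $v^{(1)}$ and $v^{(2)}$ are linearly independent, $v^{(2)} + \tan(s)\,v^{(1)}$ is a nonzero vector in the eigenspace of $\lambda^*$, and a second application of Lemma~\ref{lemma:Wnot0wp1} yields $P(S(x) = s) = 0$.

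For $R(x)$: Set $V(x) := (W(v^{(1)}, x), W(v^{(2)}, x))$, so that $R(x) = \sign(W(v^{(1)}, x))\,|V(x)|/|v^{(1)}|^2$ by the harmonic addition formula. Applying~\eqref{eq:limitdiff} in each of the two coordinates $v^{(1)}, v^{(2)}$ yields the $\R^2$-valued identity $V'' = e^{-\lambda^* T}(V' + V^D)$, where $V''$ has the same distribution as $V(x)$ (by Lemma~\ref{lemma:delayedcoupling}) and $T$ is exponential with rate $\|x\|$, independent of the pair $(V', V^D)$. Taking Euclidean norms gives $|V''| = e^{-\lambda^* T}\,|V' + V^D|$. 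For any $c>0$, condition on $|V'+V^D|$: on the event $\{|V'+V^D|>0\}$ the equality forces $T$ to take a specific value, which has probability zero since $T$ is atomless, while on $\{|V'+V^D|=0\}$ the left-hand side is zero and cannot equal $c$. Hence $P(|V(x)| = c) = P(|V''|=c) = 0$ for every $c>0$, and $P(|V(x)| = 0) \le P(W(v^{(1)}, x) = 0) = 0$ by Lemma~\ref{lemma:Wnot0wp1}. This transfers to $R(x)$ immediately.

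The main obstacle is to justify cleanly that $T$ is independent of the pair $(V', V^D)$. One must unpack the coupling to observe that $V'$ and $V^D$ are measurable functions only of the Poisson clocks driving the red/blue/purple dynamics together with the auxiliary selection $X$ — none of which involves $T$, since $T$ is introduced solely to describe the time delay relating $Z'$ and $Z''$ in~\eqref{eq:couplingdiff}. Once this independence is in hand, the atomlessness of the exponential distribution propagates to the radial marginal of $V(x)$, and the claim is complete.
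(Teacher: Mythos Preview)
Your proof is correct and follows essentially the same approach as the paper's: for $S(x)$ you use linearity~\eqref{eq:linearity} together with Lemma~\ref{lemma:Wnot0wp1}, and for $R(x)$ you use the coupling identity~\eqref{eq:limitdiff} and the atomlessness of the independent exponential variable $T$. If anything, your version is slightly more careful than the paper's --- you explicitly handle the boundary value $s=-\pi/2$, you track the sign in $R(x)$ rather than tacitly passing to $|R(x)|$, and you spell out why $T$ is independent of $(V',V^D)$ (which the paper simply asserts).
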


\begin{proof}[Proof of claim]
By~\eqref{eq:linearity}, for every $\theta\in(-\pi/1,\pi/2)$ the event $\{S(x)=\theta\}$ is equivalent to
$$
\tan(\theta)W(v^{(1)},x)+W(v^{(2)},x)=W\big(\tan(\theta)v^{(1)}+v^{(2)},x\big)
$$
being zero, which by Lemma~\ref{lemma:Wnot0wp1} occurs with probability zero. Hence, $S$ is continuous.

Next, consider the coupling construction from Section~\ref{sec:coupling}. Using~\eqref{eq:limitdiff} we obtain that $R(x)$ is equal in distribution to
$$
e^{-\lambda^*T}\sqrt{\big(W'(v^{(1)},x)+W^D(v^{(1)},x)\big)^2+\big(W'(v^{(2)},x)+W^D(v^{(2)},x)\big)^2}.
$$
Since $T$ is continuous, and independent from $W'$ and $W^D$, which in turn are almost surely non-zero, also $R$ is continuous.
\end{proof}

To complete the proof of the theorem it remains to prove that the random vector $(R,S)$ is fully supported on $\R\times[-\pi/2,\pi/2)$.

\begin{claim}\label{claim:support}
For every $x\in\Z^N\setminus\{0\}$, $(r,\theta)\in\R\times(-\pi/2,\pi/2)$ and $\eps>0$ we have
$$
P\Big(R(x)\in(r-\eps,r+\eps),S(x)\in(\theta-\eps,\theta+\eps)\Big)>0.
$$
\end{claim}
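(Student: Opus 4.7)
The plan is to reduce to a concentration-type statement for the pair $(W(v^{(1)},x), W(v^{(2)},x))$, and then exploit the Markov property at a large time $t_0$. Set $w_1^\ast := (N/2) r\cos(\theta)$ and $w_2^\ast := -(N/2) r\sin(\theta)$, so that, by the harmonic addition formulas displayed just before Claim~\ref{claim:continuity}, the map $(W(v^{(1)},x), W(v^{(2)},x)) \mapsto (R, S)$ is continuous on a neighbourhood of $(w_1^\ast, w_2^\ast)$ (taking $r \ne 0$ for now; the case $r=0$ admits an analogous treatment). It therefore suffices to establish that, for some $\delta = \delta(\varepsilon) > 0$ sufficiently small,
$$
P\big(|W(v^{(1)}, x) - w_1^\ast| < \delta,\; |W(v^{(2)}, x) - w_2^\ast| < \delta\big) > 0.
$$

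My strategy is to identify, for a suitable large time $t_0$, an integer configuration $y \in \mathbb{Z}^N$ and to condition on the event $\{Z(t_0) = y\}$. By the Markov property, on this event $W(v^{(i)}, x)$ equals $e^{-\lambda^\ast t_0} W(v^{(i)}, y)$ in distribution, where the latter denotes the martingale limit for the process started afresh from $y$. I choose $y$ so that $v^{(i)}\cdot y$ is close to $e^{\lambda^\ast t_0} w_i^\ast$: concretely, set $\tilde y_j := r\, e^{\lambda^\ast t_0} \cos\bigl((j-1)\pi/N + \theta\bigr)$ and round each coordinate to the integer of matching sign and smaller absolute value. By Proposition~\ref{prop:largest_eigenv} the vector $\tilde y$ lies in the eigenspace of $\lambda^\ast$, and a direct computation using the orthogonality and the lengths of $v^{(1)}$ and $v^{(2)}$ gives $v^{(i)} \cdot \tilde y = e^{\lambda^\ast t_0} w_i^\ast$. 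The rounding then contributes an error of at most $N$ in each projection while keeping $\|y\| \le N(1 + |r|\, e^{\lambda^\ast t_0})$.

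Applying Lemma~\ref{lemma:concentration} to the process restarted from $y$ with $\alpha = \delta e^{\lambda^\ast t_0}/2$ yields
$$
P\Bigl(|W(v^{(i)}, y) - v^{(i)}\cdot y| \ge \tfrac{1}{2}\delta\, e^{\lambda^\ast t_0}\Bigr) \le \frac{20\,|v^{(i)}|^2\, \|y\|}{\delta^2 e^{2\lambda^\ast t_0}},
$$
which tends to $0$ as $t_0 \to \infty$, since $\lambda^\ast > 1$ for $N \ge 4$ and $\|y\| = O(e^{\lambda^\ast t_0})$. Multiplying by the scaling factor $e^{-\lambda^\ast t_0}$ and absorbing the $O(N)$ rounding error, this shows that, conditional on $\{Z(t_0) = y\}$, the event $\{|W(v^{(i)}, x) - w_i^\ast| < \delta \text{ for } i=1,2\}$ has conditional probability arbitrarily close to $1$, provided $t_0$ is chosen large enough.

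The remaining step, which I expect to be the main obstacle, is the reachability assertion $P(Z(t_0) = y) > 0$. Since $(Z(t))_{t \ge 0}$ is a continuous-time pure-jump Markov chain with finite transition rates, this reduces to exhibiting a finite sequence of nucleations that transforms $x$ into $y$. I plan to handle this by first driving the system to a simple intermediate state (for example, a large positive cluster concentrated at a single vertex, obtained by iteratively nucleating from a fixed positive ball so as to overwhelm the opposing tribes through annihilation) and then building $y$ by a prescribed sequence of further nucleations, exploiting the flexibility that $y$ need only be chosen from a whole family of integer configurations close to $\tilde y$.
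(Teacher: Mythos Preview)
Your approach is essentially the paper's: translate the target $(r,\theta)$ into a target pair $(w_1^\ast,w_2^\ast)$ for the martingale limits, use the Markov property at a late time $t_0$ together with the concentration bound (Lemma~\ref{lemma:concentration}) to show the conditional probability is bounded away from zero given $Z(t_0)$ is near the right eigenspace element, and finally argue reachability. The computations you outline (choice of $\tilde y$, scaling, rounding error) match the paper's up to cosmetic differences; the paper parametrises by a large scalar $M$ rather than by $t_0$, and allows the conditioned configuration to range over a ball $B_M(y)$ rather than fixing a single rounded point, but this is immaterial.

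The only substantive gap is precisely the one you flag: reachability. Your sketch of ``driving to a large cluster at a single vertex, then building $y$'' is vaguer than necessary and potentially awkward, since nucleations always spread mass to both neighbours. The paper's argument is more direct and exploits the eigenstructure: starting from any configuration with a positive particle at vertex~$1$, one nucleates at vertex~$j$, for $j=1,\dots,N$ in order, approximately $\lfloor M\rho\cos((j-1)\pi/N+\theta)\rfloor$ times. The cosine addition identity $\cos((j-2)\pi/N+\theta)+\cos(j\pi/N+\theta)=2\cos(\pi/N)\cos((j-1)\pi/N+\theta)$ then shows that the net deposit at each vertex is, up to $O(1)$, a scalar multiple of the target eigenspace vector, which for large $M$ dominates whatever was initially present. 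This single-pass construction replaces your two-stage plan and is what you should aim for.
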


\begin{proof}[Proof of claim]
Let $(r,\theta)\in\R\times(-\pi/2,\pi/2)$ and $\eps>0$ be arbitrary. By symmetry, we may assume that $r>0$. Let $c=-\tan(\theta)$ and set $y=v^{(1)}/|v^{(1)}|^2+cv^{(2)}/|v^{(2)}|^2$. Then $v^{(1)}\cdot y=1$ and $v^{(2)}\cdot y=c$, since $v^{(1)}$ and $v^{(2)}$ are orthogonal. For $M>0$ define
$$
B_M(y):=\left\{z\in\Z^N:\|z-My\| <\sqrt{\frac{2M}{N}}\right\}.
$$
Again, by Proposition \ref{prop:largest_eigenv}, $|v^{(1)}|=|v^{(2)}|=\sqrt{N/2}$. It follows that, for $z\in B_M(y)$, we have $v^{(1)}\cdot z\in(M-\sqrt{M},M+\sqrt{M})$ and $v^{(2)}\cdot z\in(Mc-\sqrt{M},Mc+\sqrt{M})$. In addition, by Lemma~\ref{lemma:concentration}, there exists $\alpha=\alpha(c)$ such that for all $M\ge1$, $z\in B_M(y)$ and $i=1,2$ we have
$$
P\big(\big|W(v^{(i)},z)-v^{(i)}\cdot z\big|\ge\alpha\sqrt{M}\big)\le\frac{5|v^{(i)}|^2\|z\|}{\alpha^2M}\le1/4.
$$
Consequently, for every $M\ge1$ and $z\in B_M(y)$, we have with probability at least $1/2$ that
\begin{equation}\label{eq:conc_bound}
\big|W(v^{(1)},z)-M\big|<(1+\alpha)\sqrt{M}\quad\text{and}\quad\big|W(v^{(2)},z)-Mc\big|<(1+\alpha)\sqrt{M}.
\end{equation}

Note that~\eqref{eq:conc_bound} implies that $W(v^{(1)},z)>0$ and, for some constant $C$, that
\begin{equation}\label{eq:quotient}
\bigg|\frac{W(v^{(2)},z)}{W(v^{(1)},z)}-c\bigg|<\frac{C}{\sqrt{M}}\quad\text{and}\quad\Big|\sqrt{W(v^{(1)},z)^2+W(v^{(2)},z)^2}-\sqrt{1+|c|^2}M\Big|<CM^{3/4}.
\end{equation}
Due to the Markov character of $(Z(t))_{t\ge0}$, conditional on the event $\{Z(t)=z\}$, the limiting variable $W(v,x)$ is equal in distribution to the limiting variable of a signed competition process with initial condition $z$ and delayed for time $t$, i.e.\
\begin{equation}\label{eq:joaozinho}
W(v,x)\stackrel{d}{=}e^{-\lambda^*t}W(v,z).
\end{equation}
Next, let $B:=(r-\eps,r+\eps)\times(\theta-\eps,\theta+\eps)$. Suppose that $M>r$ and pick $t>0$ so that $r=e^{-\lambda^*t}\sqrt{1+|c|^2}M/|v^{(1)}|^2$. Combining~\eqref{eq:quotient} and~\eqref{eq:joaozinho}, and using the fact that $\arctan$ is a continuous and strictly increasing function, we obtain that for any nonzero initial configuration $x$ and $\eps>0$ that
$$
P\big((R(x),S(x))\in B\big|Z(t)=z\big)=P\big((R(z),S(z))\in B\big)\ge1/2,
$$
which gives
$$
P\big((R(x),S(x))\in B\big)\ge\frac12P\big(Z(t)\in B_M(y)\big).
$$

It remains to show that for any nonzero initial configuration $x$, $t>0$ and all large values of $M$ we have
$$
P\big(Z(t)\in B_M(y)\big)>0.
$$
Note that it will suffice to show that there exists a finite number of nucleations that result in a configuration in $B_M(y)$ for large $M$. Since $y$ is a linear combination of $v^{(1)}$ and $v^{(2)}$ we have that $y=\rho\cos((j-1)\pi/N+\theta)$ for some $\rho\in\R\setminus\{0\}$ and $\theta\in(-\pi/2,\pi/2)$. By symmetry we may assume that $\rho>0$. We may further assume that $x_1\ge1$, since from any configuration $x$ we may reach a configuration which has positive particles at node 1.

We obtain a configuration $z$ from $x$ by, for $j=1,2,\ldots,N$, in that order, letting one of the particles at $j$ nucleate $\lfloor M\rho\cos((j-1)\pi/N+\theta)\rfloor$ times. This will result in
$$
\lfloor M\rho\cos((j-2)\pi/N+\theta)\rfloor+\lfloor M\rho\cos(j\pi/N+\theta)\rfloor=2M\rho\cos((j-1)\pi/N+\theta)\cos(\pi/N)\pm2
$$
particles added at position $j$. For large $M$ this vastly exceeds the number of particles initially present at position $j$, resulting in the configuration $z$ obtained from $x$ satisfying the equation $\|z-My\|<\sqrt{2M/N}$, as required.
\end{proof}

Claims~\ref{claim:continuity} and~\ref{claim:support} together complete the proof of Theorem~\ref{thm:signed}.

\section{Non-coexistence for the competing urn process}\label{sec:proof}

We are now in position to prove Theorem~\ref{thm:urn}, which we shall deduce from Theorems~\ref{thm:AGJM} and~\ref{thm:signed}. With this, we rule out coexistence between any number of competing types in the competing urn scheme on a cycle.

Fix $N\ge K\ge 2$, and consider the competing urn scheme with $K$ types on a cycle of length $N$. Consider an arbitrary (nonzero) configuration of balls of $K$ types positioned into the $N$ urns so that no urn contains balls of more than one type. We may represent such a configuration by an element $y\in\{0,1,\ldots\}^{\{1,2,\ldots,N\}\times\{1,2,\ldots,K\}}$, where $y_{i,j}$ denotes the number of balls at vertex $i$ of type $j$.

Recall, from Section~\ref{sec:AGJM}, the definition of tribes and fronts of a configuration of balls on $C_N$ of $K$ different types. Let $\tau(y)$ denote the number of tribes in the configuration $y$. If $\tau(y)\ge2$, then the number of fronts in $y$ is also equal to $\tau(y)$. (If there is only one tribe, then there are no fronts.) To prove Theorem~\ref{thm:urn} we will need to show that the competing urn process, almost surely, eventually consists of a single remaining tribe. Since, as the competing urn process evolves, the number of tribes can only go down, it will suffice to show that, almost surely, the number of tribes eventually goes down. We shall do this in two steps, depending on whether the number of tribes is even or odd.

Suppose first that the competing urn process is run from an initial configuration $y$ consisting of an even number of tribes. We associate to the configuration $y$ a configuration $x\in\Z^N$ by encoding the balls in each tribe by $+1$s and $-1$s in an alternating fashion. More precisely, we first label the tribes from 1 to $\tau(y)$ in a cyclic clockwise manner, starting with the tribe to which the vertex labeled 1 belongs. (If this vertex is vacant and belongs to two tribes, start with the clockwise-most one.) Second, for $i=1,2,\ldots,N$, if $\ell_i$ denotes the label of the tribe to which node $i$ belongs, we then set
\begin{equation}\label{eq:y_config}
x_i:=(-1)^{\ell_i-1}\sum_{j=1}^Ky_{i,j}.
\end{equation}
Notice that $|x_i|$ denotes the number of balls at position $i$ in the configuration $y$.
The definitions of tribes and fronts extend straightforwardly to configurations $x\in\Z^N$, by regarding them as two-type configurations on $C_N$. Moreover, since  $\tau(y)$ is even, then particles in the first tribe are labeled positively and particles of the last tribe negatively, which implies that the number of tribes in $x$ equals the number of tribes in $y$.

\begin{claim}\label{claim:even}
Suppose that $\tau(y)$ is even. Then there exists a coupling between two competing urn process started in $y$ and in $x$, respectively, such that, with probability one, at each time $t\ge0$ either both processes have $\tau(y)$ tribes remaining or neither does.
\end{claim}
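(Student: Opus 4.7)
The plan is to run both processes on a common probability space using a shared family of Poisson clocks, one per vertex with time-varying intensity equal to the total number of balls at that vertex, so that the same clock ring triggers a nucleation simultaneously in both processes. The key structural observation is that, because $\tau(y)$ is even, the alternating sign assignment~\eqref{eq:y_config} is consistent across the cyclic boundary: tribe $1$ receives sign $+1$ while tribe $\tau(y)$ receives sign $(-1)^{\tau(y)-1} = -1$, so adjacent tribes always carry opposite signs (including across the edge between vertex $N$ and vertex $1$). Consequently, between any two neighbouring occupied vertices lying in the same tribe of $y$ the corresponding signs in $x$ agree, while between neighbouring occupied vertices lying in different tribes of $y$ they disagree.

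With this in place, I would prove by induction on nucleation events, up to the first moment any tribe disappears, that at every vertex $i$ the two processes satisfy $\sum_{j=1}^{K} y_{i,j}(t) = |x_i(t)|$, and that whenever this quantity is nonzero the sign of $x_i(t)$ equals $(-1)^{\ell_i(t) - 1}$, where $\ell_i(t)$ is the label of the tribe of $y(t)$ containing $i$. When a clock rings at vertex $i$, both processes send a copy of the ball at $i$ to each neighbour. For a neighbour of the same tribe in $y$ (including the case where the neighbour is vacant and will now join the tribe of $i$), the sign at the neighbour in $x$ either already matches the sign at $i$ or is zero, so the ball count is incremented by one in both processes with matching sign. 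For a neighbour of a different adjacent tribe, the sign at the neighbour in $x$ is opposite by the observation above, so an annihilation occurs simultaneously in both processes. In every case the invariant is preserved, and the intensities at every vertex remain equal, justifying that a single clock can legitimately drive both processes.

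The claim then follows: a tribe of $y$ is eliminated precisely when its total ball count reaches zero, which by the invariant coincides exactly with the moment the corresponding maximal monochromatic run in $x$ is emptied. Hence both processes retain $\tau(y)$ tribes at all times before the first elimination, and drop simultaneously at that moment; since the tribe count is non-increasing in both processes, they continue to agree on whether $\tau(y)$ tribes remain at every subsequent time. The only delicate point is the handling of vacant vertices lying between two tribes (which may carry either tribe label); since $x_i = 0$ at such a vertex, the ambiguity does not affect the invariant, and the dynamics match cleanly regardless of which label one chooses.
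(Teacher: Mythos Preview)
Your proposal is correct and follows the same approach as the paper: couple the two processes via shared clocks and observe that, so long as no tribe has been eliminated, each nucleation has the identical effect in both processes. The paper's own proof is a two-sentence sketch of precisely this argument; you have simply spelled out the invariant and the case analysis in greater detail, including the handling of vacant boundary vertices.
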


\begin{proof}[Proof of claim]
Couple the two processes so that they evolve according to the same clocks. Then, as long as no tribe becomes extinct, each nucleation has the same effect in both processes.
\end{proof}

By Theorem~\ref{thm:AGJM} it follows that, with probability one, for the process starting in $x$ all vertices will eventually be occupied by either positive or negative particles. That is, the process will eventually consist of a single remaining tribe. In particular, if the number of tribes started out even, then it will eventually go down by one. Together with Claim~\ref{claim:even} we conclude that, with probability one, the $K$-type competing urn process starting from a configuration $y$ with an even number of tribes will eventually consist of at most $\tau(y)-1$ tribes.

Once the number of tribes decreases, the coupling in Claim~\ref{claim:even} may cease to hold. This is due to the fact that the encoding from $y$ to $x$ treats two tribes at an even distance (in terms of labels) as being of equal type, whereas in the original configuration $y$ may consist of different types. Indeed, in the process started from $x$ the number of fronts will decrease by 2 each time a tribe is eliminated, whereas in the process started from $y$ the number of fronts will decrease by either 1 or 2, depending on whether the tribes clockwise and counterclockwise of the eliminated tribe are of different types or not. Hence, the above argument only shows that in the competing urn process starting from a configuration with an even number of tribes, at least one tribe will be eliminated, almost surely.

We shall need a separate argument when the number of tribes is odd. So, suppose next that the number of tribes $\tau(y)$ is odd and greater than or equal to 3. (When there is only one tribe there is nothing to prove.) We again label the tribes of $y$ from 1 to $\tau(y)$ in a cyclic clockwise fashion. By rotating the labelling of the vertices (if necessary) we may assume that the tribe labeled 1 has its counterclockwise-most endpoint at the vertex labeled 1. Again we obtain from $y$ a configuration $x\in\Z^N$ as in~\eqref{eq:y_config}. Since the number of tribes is odd, the first and final tribes both receive positive values by the encoding in~\eqref{eq:y_config}. Considering $x$ as a configuration on the cycle with a sign-reversing edge, and using the notion of tribes from Section~\ref{sec:signed}, the number of tribes in $x$ again coincides with the number of tribes in $y$.

\begin{claim}\label{claim:odd}
Suppose that $\tau(y)\ge3$ is odd. Then there exists a coupling between the competing urn process started in $y$ and the auto-annihilative process started in $x$ such that, with probability one, at each time $t\ge0$ either both processes have $\tau(y)$ tribes remaining or neither does.
\end{claim}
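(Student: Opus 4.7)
The plan mirrors the argument of Claim~\ref{claim:even}: use shared Poisson clocks to couple the two processes, and show by an inductive invariant that until the first tribe is lost, the two configurations agree under the encoding $y \mapsto x$. The only new ingredient is handling the sign-reversing edge, which I would place between vertices $N$ and $1$ of $C_N$, matching the reinforcement matrix $A^*$. By our labelling convention, tribe $1$ begins at vertex $1$ and tribe $\tau(y)$ ends at vertex $N$, so the sign-reversing edge is exactly the one separating the first and the last tribes---the unique edge of $C_N$ across which the alternating sign encoding from~\eqref{eq:y_config} fails to alternate (since $\tau(y)$ is odd, both tribe $1$ and tribe $\tau(y)$ are assigned positive signs).

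Concretely, I would run both processes off the same Poisson clocks, pairing each ball in the $y$-process with a particle of the matching assigned sign at the corresponding vertex in the $x$-process, and maintain the invariant
\[
Z_i(t) = (-1)^{\ell_i - 1}\sum_{j=1}^{K} Y_{i,j}(t), \qquad i = 1, \ldots, N,
\]
up to (but not including) the first nucleation that eliminates a tribe. Granting the invariant, both configurations have the same tribe count at each such time $t$, so either both processes still have $\tau(y)$ tribes or both have simultaneously lost one, which is precisely the claim. To propagate the invariant through a nucleation at vertex $k$ with a particle of sign $\sign(Z_k(t-))$, I would check two cases for each neighbour $j$ of $k$ in $C_N$: if $j$ lies in the same tribe as $k$, then $\{k,j\}$ is not the sign-reversing edge (otherwise one of $k,j$ would be in tribe $1$ and the other in tribe $\tau(y)$), and $Z_j(t-)$, $Z_k(t-)$ agree in sign, so in both processes the new ball is simply added; if $j$ lies in a different tribe, then either $\{k,j\}$ is a regular edge (whereupon the alternating labels force $Z_j(t-)$ and $Z_k(t-)$ to have opposite signs) or $\{k,j\}$ is the sign-reversing edge (whereupon $Z_j(t-)$ and $Z_k(t-)$ have the same sign, but the sign reversal built into $A^*$ flips the contribution), and in either case an annihilation occurs in both processes, preserving the invariant.

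The main obstacle is precisely this bookkeeping: verifying that the sign-reversing edge coincides with the unique edge where the alternating encoding fails, and that the $\pm 1$ entries of $A^*$ at a nucleation match the annihilations dictated by the competing urn dynamics in all configurations of tribe adjacencies. Once this compatibility is established, the invariant propagates by induction over the sequence of nucleation times. When a tribe is first eliminated the encoding may no longer be consistent (because two previously separated tribes can become neighbours with the same assigned sign but different types in $y$), but at this common stopping time both processes have already lost a tribe simultaneously, which is all the claim requires.
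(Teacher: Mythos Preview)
Your approach is correct and matches the paper's exactly: the paper's entire proof is the two-line assertion that under a shared-clocks coupling, as long as no tribe becomes extinct, each nucleation has the same effect in both processes. Your placement of the sign-reversing edge at the unique front where the alternating encoding fails, together with the case analysis of nucleations, is precisely the verification the paper leaves implicit.
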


\begin{proof}[Proof of claim]
Couple the two processes so that they evolve according to the same clocks. Then, as long as no tribe becomes extinct, each nucleation has the same effect in both processes.
\end{proof}

From Theorem~\ref{thm:signed} it follows that, with probability one, the auto-annihilative process $(Z(t))_{t\ge0}$ started in $x$ satisfies
$$
\lim_{t\to\infty}e^{-\lambda^* t}Z(t)=\big(R\cos\big(\pi(j-1)/N+S\big)\big)_{j=1}^N,
$$
for some continuous random variables $R$ and $S$.

Since the cosine function has precisely one root on any interval of the form $[\theta,\theta+\pi)$, it follows that, for large $t$, the auto-annihilative process consists of a single tribe. Together with Claim~\ref{claim:odd} we conclude that, with probability one, the competing urn scheme starting in $y$ will eventually consist of at most $\tau(y)-1$ tribes.

To complete the proof of Theorem~\ref{thm:urn} we now notice that, regardless of the number of tribes present in the initial configuration $y$, one of the tribes will eventually become extinct. Let $T_1$ denote the time at which the first tribe goes extinct. Restarting the argument with the configuration at time $T_1$ as the initial configuration, and appealing to the Markov character of the process, we conclude that at some future time $T_2$ another tribe will eventually have to become extinct, almost surely. Since the number of tribes can only decrease, and since there are no more than $N$ tribes to start with, we conclude that almost surely there will eventually be a single surviving tribe, and Theorem~\ref{thm:urn} follows.

\section{Non-coexistence for a spatial model of competition}\label{sec:plane}

Finally, we explain the connection between the $K$-type competing growth model on $\Z^2$ and the $K$-type competing urn scheme on a cycle, and deduce Theorem~\ref{thm:plane} from Theorem~\ref{thm:urn}. In particular, we show that the competition process on $\Z^2$, for any number of types, will almost surely have a single surviving type. The deduction is similar to that for $K=2$ types in~\cite{ahlgrijanmor19}.

Consider any configuration $z\in\{0,1,2,\ldots,K\}^{\Z^2}$ of $K$ types assigned to the vertices of $\Z^2$ consisting of a finite number of occupied sites (i.e.\ sites with values $1,2,\ldots,K$) and such that every vacant site (i.e.\ site with value 0) has at most one neighbour of each different type. The set of occupied sites in $z$ (i.e.\ its nonzero elements) can be identified with the bounded region $A(z)$ of $\R^2$ obtained by centring a unit square at each occupied vertex. We shall write $(X(t))_{t\ge0}$ for the $K$-type competing growth process on $\Z^2$ evolving from the initial configuration $z$, and let $A_t:=A(X(t))$.


Let $T_c$ denote the first time at which $X(t)$ is connected as a subset of $\Z^2$, and hence that $A_t$ is a connected region in $\R^2$, which is almost surely finite.
For $t\ge T_c$ the outer boundary of $A_t$ consists of piecewise linear segments. We shall refer to any maximal piecewise linear segment in the outer boundary of $A_t$ corresponding to a single type in $X(t)$ as a \emph{tribe}. For $t\ge T_c$, unless one of the $K$ types encoded in $X(t)$ already surrounds the others (in which case there is nothing to prove), the perimeter of $A_t$ will consist of some number $k\ge2$ tribes. Due to the Markov character of the process, and as time evolves the number of tribes can only decrease, it will suffice to prove that the number of tribes will decrease in finite time, almost surely.

We may encode the outer boundary of $A_t$ as follows: Pick a point on the perimeter that marks a break between two tribes. Follow the perimeter counterclockwise from the selected point until reaching a corner or another break between two tribes, whichever comes first. If this part of the perimeter corresponds to type $j$ and has length $\ell$, then set $U_1(t)=\ell e_j$, where $e_j$ is the $j$th coordinate vector in $\R^K$. If the stop did not occur at a corner, then also set $U_2(t)$ to be the $K$-dimensional zero-vector. Suppose next that $U_i(t)$ has been defined. Repeat the procedure to define $U_{i+1}(t)$, or both $U_{i+1}(t)$ and $U_{i+2}(t)$, depending on whether the stop occurs at a corner or not. Stop this process once one full lap around the perimeter is completed.

\begin{claim}\label{claim:perimeter}
Suppose that $t\ge T_c$ and that the perimeter of $A_t$ consists of $k\ge2$ tribes. Then the encoding of the perimeter results in precisely $2k+4$ vectors $U_i(t)$ being defined.
\end{claim}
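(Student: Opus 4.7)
The plan is to classify each lattice vertex traversed along the outer boundary of $A_t$ as a \emph{convex corner}, a \emph{concave corner}, or a \emph{straight-through vertex}, and then to count the encoded vectors in terms of these counts together with the number $k$ of tribes. The governing identity is the standard discrete turning-angle formula for a simple closed, axis-aligned, counterclockwise-oriented polygonal curve: writing $n_+$ and $n_-$ for the numbers of convex and concave corners respectively, one has $n_+ - n_- = 4$.

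The key observation is a short bootstrap argument pinning down which vertices are tribe breaks. At a concave corner, the vacant ``notch'' cell has exactly two occupied neighbours, namely the two cluster cells meeting at the corner; if these were of the same type then the multi-type bootstrap rule would fill the notch immediately, contradicting the presence of the corner, so they must be of different types and the corner is a tribe break. Both perimeter edges at a convex corner, on the other hand, belong to the same single cluster cell, so convex corners are never tribe breaks. Consequently, of the $k$ tribe breaks on the outer boundary, exactly $n_-$ occur at concave corners and the remaining $k - n_-$ at straight-through vertices.

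With this classification in hand the count is immediate. In the encoding, a stop at a corner (convex or concave) contributes a single non-zero vector $\ell e_j$ for the segment just traversed, while a stop at a straight-through tribe break contributes such a vector together with an additional zero vector. Summing over one full counterclockwise traversal of the outer perimeter therefore yields
\[
n_+ + n_- + 2(k - n_-) \;=\; (n_+ - n_-) + 2k \;=\; 4 + 2k,
\]
as required. The main subtlety I anticipate is confirming that the outer boundary is a genuine simple closed axis-aligned curve, so that the turning-angle identity applies as stated; ``saddle'' configurations at a boundary vertex (two diagonal cluster cells in, the other two out) can in principle arise, but the same bootstrap argument forces the two diagonal cluster cells to carry different types, and resolving each such vertex into two separate corner stops preserves the overall count, while any holes inside $A_t$ do not affect the outer-boundary encoding and can be ignored.
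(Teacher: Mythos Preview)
Your proof is correct and follows essentially the same approach as the paper: you use the turning-angle identity $n_+-n_-=4$ (the paper phrases this as ``$m$ inside corners and $m+4$ outside corners''), observe that concave/inside corners are always tribe breaks while convex/outside corners never are, and then count $n_++n_-+2(k-n_-)=2k+4$. You supply the bootstrap justification for why inside corners must be breaks, which the paper states without argument, and you flag the saddle and hole subtleties that the paper does not address; otherwise the two arguments are the same.
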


\begin{proof}[Proof of claim]
For $t\ge T_c$ the perimeter of $A_t$ consists of $m$ inside corners and $m+4$ outside corners, for some $m\ge0$. Note that an inside corner must correspond to a break between tribes, but that an outside corner cannot. Hence, $m$ breaks occur at inside corners and the remaining $k-m$ breaks occur outside of the corners (i.e.\ on line segments). Since each corner (either inside or outside) results in one $U_i$-vector being defined, and each break on a line segment results in two $U_i$-vectors being defined, this means that $(2m+4)+2(k-m)=2k+4$ vectors are defined in total.
\end{proof}

The key step of the proof is the next, which uses the $U_i$-encoding to construct a coupling the growth process on $\Z^2$ and that on a cycle. Suppose that $z$ is connected as a subset of $\Z^2$ and let $y=(y_1,y_2,\ldots,y_N)$ be the element of $\{0,1,\ldots\}^{\{1,2,\ldots,N\}\times\{1,2,\ldots,K\}}$ obtained from the $U_i$-encoding of $z$.

\begin{claim}\label{claim:U-coupling}
Suppose that $z$ is connected and consists of $k\ge2$ tribes. Then there exists a coupling between the $K$-type competing growth process on $\Z^2$ started in $z$ and the $K$-type competing urn scheme on a cycle of length $2k+4$ started in $y$ such that, with probability one, at each time $t\ge0$ either both processes have $k$ tribes remaining or neither does.
\end{claim}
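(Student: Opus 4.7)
The plan is to construct a coupling in which both processes are driven by a common collection of independent rate-1 Poisson clocks, and verify a configuration-matching invariant. Biject the balls of the initial urn configuration $y$ with the unit perimeter edges of $A_0$ in the order induced by the counterclockwise $U_i$-encoding, so that the balls of urn $i$ correspond to the consecutive unit edges composing segment $U_i(0)$, each ball inheriting the segment's type. To each such pair assign a shared rate-1 Poisson clock which, when it rings, triggers both the nucleation of that ball on the cycle and the occupation of the vacant $\Z^2$ vertex adjacent to the corresponding perimeter edge, together with the ensuing bootstrap cascade; balls and perimeter edges born later inherit clocks by the same correspondence.

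The task is then to prove, by induction on Poisson events, the invariant \emph{``at every time $t$ strictly less than the first tribe-extinction time, the urn configuration on $C_{2k+4}$ equals the $U_i$-encoding of $A_t$''}. Checking that a single Poisson event preserves the invariant amounts to a case analysis on the position of the filled $\Z^2$ vertex. For a vertex in the interior of a segment $U_i$ of type $j$, the bootstrap cascade propagates outward along the row or column orthogonal to the segment and terminates precisely at the two endpoints of $U_i$: outside corners bound the cascade geometrically, while inside corners (which by the proof of Claim~\ref{claim:perimeter} always coincide with tribe breaks) stop the cascade by placing its front adjacent to a segment of different type, which cannot bootstrap. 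The net effect on the encoding is that $U_i$ migrates outward by one unit (its ball count unchanged) while each of its two cycle-neighbours grows by one ball of type $j$. When a neighbour is a segment of a different type $j'$, the addition of a type-$j$ ball corresponds on $\Z^2$ to the erosion of the opposing segment by one unit, matching the annihilation rule in the urn scheme. Filled vertices adjacent to outside corners or inside corners are handled by analogous one-sided or truncated cascades with the same matching.

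Given the invariant, the number of tribes and the per-tribe perimeter length (equivalently, the per-tribe ball count) agree across the two processes at every time up to and including the first tribe extinction, so a tribe is exhausted in one process exactly when it is exhausted in the other. The main obstacle is the case analysis of the bootstrap cascade itself: one must confirm that, regardless of where the initiating fill occurs within a segment or near its endpoints, the cascade is correctly bounded by outside corners and tribe breaks so that its net effect on the $U_i$-encoding matches exactly one nucleation on the cycle together with any resulting annihilations. This relies crucially on the structural fact, also invoked in the proof of Claim~\ref{claim:perimeter}, that within a single tribe the perimeter has no inside corners, which prevents the cascade from inadvertently crossing into another segment of the same tribe.
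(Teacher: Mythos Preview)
Your argument is correct and is, in essence, a fleshed-out version of the paper's proof. The paper's proof is two sentences: it \emph{defines} the coupled urn process to be the perimeter encoding $U(t)$ for $t<T_0$ (and lets it evolve independently thereafter), and then asserts that ``it is straightforward to verify'' that this process has the law of the competing urn scheme on $C_{2k+4}$. Your explicit clock-coupling, the induction on Poisson events, and the case analysis of how the bootstrap cascade is bounded by outside corners and tribe breaks is precisely the content of that verification. The structural fact you invoke---that inside corners always coincide with tribe breaks, so a single tribe's perimeter segment has only outside corners---is indeed the key reason the cascade stays confined to one segment and matches a single urn nucleation.

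One minor point: your framing tracks a bijection between individual balls and individual perimeter edges, which forces you to re-establish the bijection after each event (since all edges of the nucleating segment $U_i$ are replaced by their outward translates, while the balls in urn $i$ persist). The paper sidesteps this by working only at the level of segment lengths (equivalently, urn counts), which is all the dynamics depend on. This is not a gap, just a bit more bookkeeping than necessary.
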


\begin{proof}[Proof of claim]
Let $U(t)=(U_1(t),U_2(t),\ldots,U_N(t))$ be obtained from the encoding of the perimeter of $A_t$. Let $T_0$ denote the first time that a tribe goes extinct. Consider a process that evolves according to $U(t)$ for $t<T_0$ and independently from $U(t)$, following the the dynamics of the competing urn scheme, for $t\ge T_0$. By Claim~\ref{claim:perimeter}, this defines $2k+4$ vectors $U_i(t)$ for each $t<T_0$. It is straightforward to verify that this process is equal in distribution to the competing urn scheme on a cycle of length $2k+4$, for all $t\ge0$.
\end{proof}

To complete the proof of Theorem~\ref{thm:plane}, let the initial configuration $z\in\{0,1,\ldots,K\}^{\Z^2}$ be arbitrary among all configurations with a finite number of nonzero coordinates. 
By Theorem~\ref{thm:urn} we know that the competing urn scheme on any cycle will eventually consist of a single survivor, almost surely. In particular, the number of tribes will decrease in finite time with probability one. Due to the Markov character of the processes, and Claim~\ref{claim:U-coupling}, also in the competing growth process on $\Z^2$ the number of tribes will decrease in finite time, as required.

\section{Open problems}\label{sec:open}

We end this text with a few open problems that we hope will inspire future work.

For any finite connected graph $G$, in~\cite{ahlgrijanmor19} the authors proved that the two-type competing urn scheme has a single surviving colour, almost surely. Moreover, the authors gave examples of finite connected graphs for which coexistence of $K\ge3$ types is possible (with positive probability). The path and the cycle are the only known examples of finite connected graphs for which the $K$-type competing urn scheme almost surely has a single surviving type also for $K\ge3$. In~\cite{ahlgrijanmor19} the authors posed the problem of characterising, for each $K\ge3$, the family of graphs for which the competing urn scheme has a single survivor.

The examples in~\cite{ahlgrijanmor19} where coexistence occurs have the property that there exists a vertex which, if removed, disconnects the graph into $K\ge3$ identical subgraphs. It is possible to imagine generalisations of the same idea, where the removal of a small number of vertices disconnects the graph into identical pieces, and where coexistence occurs with positive probability. However, for graphs with a more regular structure, e.g.\ vertex-transitive graphs, it seems reasonable to expect that coexistence cannot happen. In particular, we conjecture this to be the case for the hypercube and the two- or higher-dimensional discrete torus.

\begin{conj}
For every $K\ge3$, and any nonzero initial configuration, the $K$-type competing urn scheme on the hypercube and the discrete torus has almost surely a single surviving type.
\end{conj}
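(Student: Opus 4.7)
The plan is to mimic the architecture of the cycle proof: first reduce multi-type competition to a signed auto-annihilative process on a suitable graph or double cover, then analyse the eigenstructure of the corresponding reinforcement matrix, and finally use martingale methods to rule out coexistence. The conceptual input that made the cycle proof work was that the tribes on $C_N$ admit a cyclic ordering, so they can be encoded by alternating signs, with a single sign-reversing edge (realised as a M\"obius double cover) absorbing the parity obstruction when the number of tribes is odd. On the hypercube $\{0,1\}^n$ and on the discrete torus there is no canonical ordering of the tribes, so a direct analogue is not immediately available.

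First I would dispose of $K=2$, which is already covered by Theorem~\ref{thm:AGJM} since both graphs are finite and connected. For $K\ge3$ the proposed reduction is as follows. Given the current configuration $y$, form the \emph{tribe adjacency graph} $H(y)$ whose vertices are tribes and whose edges join tribes sharing a front. A consistent $\pm1$ labelling of the tribes, compatible with annihilation at every front, exists exactly when $H(y)$ is bipartite. When it is not, choose a minimum set $E^*$ of edges of $H(y)$ whose removal makes it bipartite, and in the underlying graph $G$ declare the corresponding front edges of $G$ to be sign-reversing. The resulting signed competition process is then a natural analogue of~\eqref{eq:multi-type} on $G$; the parity obstruction on $C_N$ is here replaced by the cycle space of $H(y)$.

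The second step is to analyse the spectrum of the associated reinforcement matrix $A^*$ and rerun the martingale machinery. Both the hypercube and the torus are Cayley graphs of abelian groups, so their adjacency matrices are diagonalised by characters, and the perturbed matrix $A^*$ can be lifted to the adjacency matrix of an unramified double cover of $G$, exactly as $A^*$ on $C_N$ was obtained from $\tilde A$ on $C_{2N}$ in Section~\ref{sec:eigenstructure}. Since the leading eigenvalue of the adjacency matrix is $n$ on the hypercube and $2d$ on the $d$-dimensional torus, it sits comfortably above the threshold $\lambda>1$ needed in Lemma~\ref{lemma:L2bound} and Lemma~\ref{lemma:L2mg}. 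One would then hope to produce a nontrivial $L^2$ martingale limit in the top eigenspace of $A^*$, and adapt Proposition~\ref{prop:limit}, Lemma~\ref{lemma:Wnot0wp1} and Lemma~\ref{lemma:concentration} to show that $e^{-\lambda^* t}Z(t)$ converges almost surely to a nonzero vector whose sign pattern forces the tribe count to drop.

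The main obstacle, I expect, is that the set of sign-reversing edges $E^*$ depends on the current configuration through $H(y)$, and this combinatorial datum changes discontinuously when a tribe becomes extinct. On $C_N$ this is benign because $H(y)$ is itself a cycle and the recoding after an extinction is handled automatically by the strong Markov property. On the hypercube or torus the topology of $H(y)$ can change qualitatively: two previously non-adjacent tribes can become adjacent across the region vacated by an extinct tribe, and the cycle space of $H(y)$ can grow. A careful induction on a suitable complexity measure --- say the pair (number of tribes, first Betti number of $H(y)$) --- together with the strong Markov property should allow the argument to restart after each topological event, provided the martingale analysis guarantees almost-surely finite waiting times between events. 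Should this combinatorial bookkeeping prove intractable, an alternative worth exploring is a peeling argument which at each stage isolates a single pair of neighbouring tribes, couples their interaction to a two-type process covered by Theorem~\ref{thm:AGJM}, and uses the spectral gap of the Cayley graph to bound the interference from the remainder of the configuration.
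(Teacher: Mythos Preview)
This statement is a conjecture, not a theorem: it appears in Section~\ref{sec:open} among the open problems, and the paper offers no proof. There is therefore nothing to compare against, and your write-up is explicitly a plan of attack rather than a proof (``one would then hope'', ``the main obstacle, I expect'').

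The decisive gap is the step you pass over in a single clause: ``a nonzero vector whose sign pattern forces the tribe count to drop''. On the cycle this is the entire content of Theorem~\ref{thm:signed}: the top eigenspace of $A^*$ is two-dimensional, its elements are the shifted cosines $(r\cos((j-1)\pi/N+\theta))_j$, and each such vector has exactly one sign change in the M\"obius sense, so a nonzero limit in that eigenspace automatically has one tribe. For the hypercube or torus equipped with a configuration-dependent set $E^*$ of sign-reversing edges, the matrix $A^*$ and its top eigenspace vary with $y$, and you give no reason why the leading eigenvectors of $A^*$ should have fewer sign-domains than the configuration you started from. The lift to a double cover is formally correct, but that cover depends on $E^*$ and is not in general a Cayley graph of an abelian group, so the character diagonalisation you invoke does not apply to $A^*$. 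A secondary issue: the threshold $\lambda^*>1$ in Lemmas~\ref{lemma:L2bound}--\ref{lemma:L2mg} is specific to the $2$-regular cycle; on a $d$-regular graph the total mass is dominated by $e^{dt}$ and the corresponding $L^2$ bound requires $\lambda^*(A^*)>d/2$, which is not automatic for an unbalanced signing and is not addressed by observing that the \emph{unsigned} top eigenvalue is $n$ or $2d$. The alternative peeling argument in your final paragraph is too schematic to assess, since on these graphs a tribe can border many others at once and the claimed spectral-gap control of the interference is not supplied.
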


We have in this paper considered a setting where balls of different types nucleate according to Poisson clocks that tic at the same rate. More generally, one could allow clocks of balls of different type to tic at different rate. Our methods to not seem to extend easily to cover this more general setting. In particular, it seems challenging to encode the competition between different types in a system consisting of positive and negative particles in a setting with more than two rate parameters. We believe that coexistence cannot happen also in this setting, but we expect that new ideas will be required to verify this belief.

\begin{conj}
For every $K\ge3$, and any nonzero initial configuration, the $K$-type competing urn scheme on the cycle, where balls of type $j$ have clocks that tic at rate $\lambda_j$, has almost surely a single surviving type.
\end{conj}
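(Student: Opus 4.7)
The plan is to reduce the conjecture to Theorem~\ref{thm:urn} by showing that only the fastest types can survive. Let $\lambda^\ast := \max_j\lambda_j$, and split the types into a fast set $J^\ast := \{j:\lambda_j = \lambda^\ast\}$ and a slow set $J^{\text{slow}} := \{j:\lambda_j<\lambda^\ast\}$. The goal is to prove that every type in $J^{\text{slow}}$ almost surely becomes extinct in finite time. Once this is established, the surviving subsystem is an equal-rate competing urn scheme among the types in $J^\ast$ (after the trivial time change $t\mapsto \lambda^\ast t$), and Theorem~\ref{thm:urn} gives a single surviving type.

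I would first settle extinction of slow types in the two-type case with rates $\lambda_1<\lambda_2$. Since each ball carries its own type-dependent Poisson clock, it is natural to compare the annihilative two-type dynamics with the pair $(Y_1^{\text{cons}}(t), Y_2^{\text{cons}}(t))$ of independent pure-growth multi-type branching processes on $C_N$ with mean matrices $\lambda_1 A$ and $\lambda_2 A$. A ball-level coupling, matching each surviving annihilative ball to a corresponding conservative ball via its lineage, yields the domination $Y_j^{\text{ann}}(t)\le Y_j^{\text{cons}}(t)$ coordinate-wise. By Perron--Frobenius together with the classical Kesten--Stigum theorem, $e^{-2\lambda_j t}Y_j^{\text{cons}}(t)\to W_j v$ almost surely, with $W_j>0$ a.s.\ and $v$ the all-ones vector; in particular $\|Y_1^{\text{ann}}(t)\| = O(e^{2\lambda_1 t})$ almost surely. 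The remaining task is to upgrade this mismatched-exponential upper bound to genuine extinction: I would analyse the interface between type-1 and type-2 tribes, where the rate at which type-2 balls arrive at type-1 sites is of order $e^{2\lambda_2 t}$ whereas the local type-1 birth rate is only of order $e^{2\lambda_1 t}$, and use a Borel--Cantelli argument applied to suitably chosen hitting times of the tribe boundary to force extinction of type~1.

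To lift this to arbitrary $K\ge 3$, I would use a stochastic-domination argument: fix a slow type $j$, merge every other type into a single auxiliary ``enemy'' colour running at rate $\lambda^\ast$, and couple via Poisson thinning so that every enemy ring in the $K$-type process is realised as an enemy ring in the auxiliary two-type process (with extra fictitious rings filling in the rate difference). A lineage-tracking coupling in the spirit of Lemma~\ref{lemma:delayedcoupling} should show that the type-$j$ population in the $K$-type process is dominated by the type-$j$ population in the auxiliary two-type process with rates $\lambda_j<\lambda^\ast$. A union bound over the finitely many slow types then completes the extinction step.

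The principal obstacle is converting the expected-rate mismatch at the type-1/type-2 interface into almost sure extinction: in expectation the annihilation rate dominates the birth rate, but to conclude extinction one must propagate the boundary squeeze into the interior of a type-1 tribe rather than rely on aggregate estimates alone. I expect this will require a quantitative refinement of Lemma~\ref{lemma:concentration}, applied simultaneously to both branching processes so as to couple the positions of fronts on $C_N$. The merge-and-dominate reduction is also delicate, since the annihilation rule is nonlinear in the vertex configurations and the coupling must track individual ball lineages rather than aggregate counts. With these ingredients in place, the argument closes via Theorem~\ref{thm:urn}.
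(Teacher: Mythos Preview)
This statement is listed in the paper as an open \emph{conjecture}, not a theorem; the paper offers no proof and explicitly remarks that ``new ideas will be required to verify this belief,'' since the signed-particle encoding that drives Theorems~\ref{thm:AGJM} and~\ref{thm:signed} breaks down when different types carry different clock rates. So there is no paper proof to compare against; I can only assess your plan on its own merits.

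The central gap is in the merge-and-dominate step. You propose to collapse all types other than a fixed slow type $j$ into a single enemy colour running at the maximal rate $\lambda^\ast$, and then to argue that the type-$j$ population in the true $K$-type process is dominated by the type-$j$ population in this auxiliary two-type process. But the domination runs the wrong way. In the $K$-type process the enemy colours annihilate one another, and some of them run at rates strictly below $\lambda^\ast$; in your auxiliary process the merged enemy neither self-annihilates nor slows down. The enemy is therefore \emph{stronger} in the auxiliary process, so type $j$ should be \emph{smaller} there, not larger. Extinction of type $j$ in the two-type auxiliary system thus says nothing about extinction in the $K$-type system, where enemy-on-enemy annihilation may carve out space into which type $j$ can expand. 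More generally, annihilative dynamics of this kind are notoriously non-monotone in the initial data, and neither direction of stochastic domination between the $K$-type and merged two-type processes is available without a substantial new idea; the lineage-tracking coupling of Section~\ref{sec:coupling} does not supply one.

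A second, independent gap is that even your base case---two types on $C_N$ with unequal rates $\lambda_1<\lambda_2$---is itself not covered by anything in the paper. The encoding behind Theorem~\ref{thm:AGJM} relies on both types sharing a common rate so that the $\pm1$ process is governed by the single matrix $A$; with $\lambda_1\neq\lambda_2$ that encoding fails outright. Your outline for this case (branching-process domination plus a boundary Borel--Cantelli argument) is at the level of a heuristic, and you already flag the interface step as ``the principal obstacle.'' It is: converting an exponential growth-rate gap into almost sure extinction of the slower type on the cycle is a genuine problem, not a quantitative refinement of Lemma~\ref{lemma:concentration}.
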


In~\cite{ahlgrijan21}, the competing urn scheme was studied on the $\Z^d$ nearest-neighbour lattice, for $d\ge1$. The authors considered the infinite initial configuration consisting of one ball at each position, whose colour (either red or blue) was determined by i.i.d.\ coin flips. The configuration of balls induces a colouring of $\Z^d$, where the colour of each site is given by the colour of the balls currently present at the site (and white, say, if no ball is currently present). The authors studied the question of fixation, and showed that when the coin is fair, every site changes colour infinitely often almost surely, whereas for a biased coin, the colouring fixates locally (to the dominating colour) in the sense that for each site there exists an almost surely finite time after which the colour no longer changes. For $K\ge3$ the initial colouring could be determined by the roll of a (possibly biased) $K$-sided die. The analogous problem in this setting remains open.

\begin{problem}
For $K\ge3$ and $d\ge1$, determine under what conditions the $K$-type competing urn scheme on $\Z^d$ fixates and not.
\end{problem}

The two-type competing urn scheme on $\Z$, evolving from a uniform random initial colouring, can be described as the evolution of monochromatic intervals competing for space. In~\cite{ahlgrijan21}, the authors found that the height (i.e.\ number of balls) of a typical interval is $\Theta(t^{-1/4}e^{2t})$ at arbitrarily large times $t$, and posed the problem of determining the width of a typical interval. It is tempting to look towards the auto-annihilative growth process on a cycle when trying to understand the evolution of the monochromatic intervals. Indeed, the auto-annihilative process is indicative of the anatomy of an isolated interval, when evolving in equilibrium.
Equating the typical height of an interval with the growth rate $e^{\lambda^* t}$ of the auto-annihilative process suggests that the width of a typical interval ought to grow as $\Theta(\sqrt{t/\log(t)})$.

\begin{problem}
For the two-type competing urn scheme on $\Z$, determine the typical width of a monochromatic interval. Is it true that the width is $\Theta(\sqrt{t/\log(t)})$ with high probability?
\end{problem}

As an alternative to the random initial configuration considered in~\cite{ahlgrijan21}, one could consider the setting with $K\ge3$ types initially present, or where all particles initially present (one for each site) are of distinct types. For $d=1$, the above question regarding the length of a typical interval remains relevant.

\noindent
{\bf Acknowledgements.} Research in part supported by the Swedish Research Council through grant 2021-03964 (DA) and 2016-04566 (CF).


\newcommand{\noopsort}[1]{}\def\cprime{$'$}

\bigskip

{\small
\noindent
{\sc Department of Mathematics, Stockholm University\\ 
SE-10691 Stockholm, Sweden}\\
\texttt{daniel.ahlberg@math.su.se}, \texttt{carolina.fransson@math.su.se}\\

\end{document}